\newtheorem{theorem}{Theorem}[section]
\newtheorem{lemma}[theorem]{Lemma}
\newtheorem{corollary}[theorem]{Corollary}
\newtheorem{proposition}[theorem]{Proposition}
\newtheorem{question}[theorem]{Question}
\theoremstyle{definition}
\theoremstyle{remark}
\newtheorem*{claim}{Claim}
\newtheorem*{sub-claim}{sub-claim}
\newcommand{\explicitSet}[1]{\left\lbrace #1 \right\rbrace}
\newcommand{\brackets}[1]{\left\langle #1 \right\rangle}
\newcommand{\set}[2]{\explicitSet{#1 \colon #2}}
\newcommand{\seq}[2]{\brackets{#1 \colon #2}}
\newcommand{\0}{\emptyset}
\renewcommand{\a}{\alpha}
\renewcommand{\b}{\beta}
\newcommand{\card}[1]{\left\lvert #1 \right\rvert}
\newcommand{\continuum}{\mathfrak{c}}
\newcommand{\p}{\mathbb{P}}
\newcommand{\w}{\omega}
\renewcommand{\k}{\kappa}
\newcommand{\sub}{\subseteq}
\newcommand{\rest}{\!\restriction\!}
\newcommand{\homeo}{\cong}
\newcommand{\closure}[1]{\overline{#1}}
\newcommand{\s}{\sigma}
\newcommand{\N}{\w^\w}
\newcommand{\C}{2^\w}
\renewcommand{\l}[1]{\mathrm{lev\!\left(#1\right)}}
\newcommand{\cat}{\,\!^\frown}
\newcommand{\lev}[2]{\mathrm{Lev}_{#1}\!\left(#2\right)}
\renewcommand{\[}{[\![}
\renewcommand{\]}{]\!]}
\def\al{\alpha}
\def\ff{\mathcal{F}}
\def\fin{\mbox{FIN}}
\def\ga{\gamma}
\def\ka{\kappa}
\def\om{\omega}
\def\sm{\setminus}
\def\su{\subseteq}
\begin{document}

\title{Partitions of $2^\w$ and Completely Ultrametrizable Spaces}

\author[W. R. Brian]{William R. Brian}
\address {
William R. Brian\\
Department of Mathematics\\
Tulane University\\
6823 St. Charles Ave.\\
New Orleans, LA 70118}
\email{wbrian.math@gmail.com}
\urladdr{wrbrian.wordpress.com}

\author[A. W. Miller]{Arnold W. Miller}
\address {
Arnold W. Miller \\
Department of Mathematics\\
University of Wis\-con\-sin-Madison \\
480 Lincoln Drive \\
Madison, WI 53706-1388}
\email{miller@math.wisc.edu}
\urladdr{www.math.wisc.edu/$\sim$miller}

\keywords{complete ultrametric, tree, condensation, partition, Borel set, Cantor space}

\begin{abstract}
We prove that, for every $n$, the topological space $\w_n^\w$ (where $\w_n$ has the discrete topology) can be partitioned into $\aleph_n$ copies of the Baire space. Using this fact, the authors then prove two new theorems about completely ultrametrizable spaces. We say that $Y$ is a condensation
of $X$ if there is a continuous bijection $f:X\to Y$.  First, it is proved that $\w^\w$ is a condensation of $\w_n^\w$ if and only if $\w^\w$ can be partitioned into $\aleph_n$ Borel sets, and some consistency results are given regarding such partitions. It is also proved that it is consistent with \textit{ZFC} that, for any $n < \w$, $\continuum = \w_n$ and there are exactly $n+3$ similarity types of perfect completely ultrametrizable spaces of size $\continuum$. These results answer two questions of the first author from \cite{Brn}.
\end{abstract}

\maketitle

\section{Introduction}

Every zero-dimensional Polish space can be represented as the end space of a countable tree. This fact is exploited in many classical proofs: that every perfect Polish space is a condensation of $\N$, the Alexandrov-Urysohn characterization of $\N$, Brouwer's characterization of $\C$. All these can be proved by combining a bit of topology with a bit of clever reasoning with trees.

In \cite{Brn}, the first author explores to what extent these classical proofs involving countable trees can be translated to the context of uncountable trees. The end spaces of arbitrary trees are precisely the completely ultrametrizable spaces, and several classical results for Polish spaces extend to this class. However, many proofs break down in the uncountable case. In general, it seems that, while many strong results for countable trees are provable in \textit{ZFC}, the corresponding results for uncountable trees often require extra set-theoretic hypotheses.

One recurring theme in the analysis of uncountable trees is that of partitioning the spaces $\C$ and $\N$. For example, it is proved in \cite{Brn} (Theorem 6.2) that $\N$ is a condensation of $\k^\w$ whenever $\N$ can be partitioned into $\k$ Borel sets. It is also proved (Theorem 5.1) that $\C$ is a condensation (that is, a continuous bijective image) of every perfect completely ultrametrizable space of size $\card{\C}$ if and only if $\C$ can be partitioned into $\k$ copies of $\C$ for every $\k \leq \continuum$.

Theorem~\ref{thm:inductivedivisions} below, which is our main lemma for what follows, can be viewed as another partition theorem for completely ultrametrizable spaces. It states that $\w_{\a+1}^\w$ can be partitioned into $\aleph_{\a+1}$ copies of $\w_\a^\w$. For $\k$ below the first singular cardinal, a simple induction then allows us to write $\k^\w$ as a union of $\k$ copies of the Baire space $\N$.

The second author, in \cite{AM1}, develops methods for finding models of \textit{ZFC} in which $\N$ can be partitioned in nice ways. Here we refine these techniques (see Theorem~\ref{thm:lotsofpartitions}) and, using the partition theorem of the previous paragraph, obtain nice partitions of the spaces $\k^\w$ for $\k < \aleph_\w$. This allows us to obtain several new consistency results for completely ultrametrizable spaces. These consistency results will be the topic of Section~\ref{sec:partitions}.

In Section~\ref{sec:types}, we present a more complex application of Theorem~\ref{thm:lotsofpartitions} to answer an open question from \cite{Brn}. Specifically, we show that, for any value of $\continuum < \w_\w$, it is consistent to have the minimum possible number of similarity classes of perfect completely ultrametrizable spaces of size $\continuum$ ($X$ and $Y$ are similar if each is a condensation of the other).

\section{Completely ultrametrizable spaces and trees}\label{sec:trees}

In what follows, if an ordinal or cardinal is treated as a topological space then it is assumed to have the discrete topology. As usual, every ordinal is equal to the set of its predecessors. Most of our terminology is standard, and the rest follows \cite{Brn}. In the interest of making this paper self-contained, we will review in this section some definitions and preliminaries from \cite{Brn} concerning trees and completely ultrametrizable spaces.

A \textbf{tree} is a connected, nonempty, infinite graph in which every two nodes are connected by exactly one path, together with a distinguished node called the \textbf{root}. If $T$ is a tree and $s,t \in T$, we say that $t$ \textbf{extends} $s$ if the unique path from the root to $t$ goes through $s$. We denote the extension relation by $\leq$.

The extension relation allows us to think of trees as partial orders, and in what follows we will freely confuse trees with ``tree-like'' partial orders. Especially ubiquitous is $\k^{<\w}$, the set of all finite sequences in $\k$, ordered by inclusion.

Two nodes of a tree $T$ are \textbf{incomparable} if neither one extends the other. A tree is \textbf{pruned} if each of its elements has a proper extension, and is \textbf{perfect} if each of its elements has two incomparable proper extensions. In what follows, a ``tree'' will always mean a pruned, nonempty tree.

If $T$ is a tree and $s$ is a node of $T$, then
$$T_s = \set{t \in T}{t \leq s \text{ or } s \leq t}$$
is the set of all nodes of $T$ that compare with $s$ under the extension relation. If $s \lneq t$ and there is no $r$ such that $s \lneq r \lneq t$, then $t$ is a \textbf{child} of $s$.

In every tree $T$, there is a unique path from the root to a given node. This naturally divides $T$ into levels. We say that a node $s$ is at \textbf{level} $n$, denoted $\l{s} = n$, if the unique path from the root to $s$ has $n-1$ elements. Thus the root is the unique node at level $0$, the children of the root are all at level $1$, etc. We write $\lev{n}{T}$ for $\set{s \in T}{\l{s} = n}$.

A \textbf{branch} of a tree $T$ is an infinite sequence $x$ of nodes in $T$ such that $x(0)$ is the root and $x(n+1)$ is a child of $x(n)$ for every $n$. $\[T\]$ is the set of all branches of $T$. $\[T\]$ has a natural topology defined by taking $\set{\[T_s\]}{s \in T}$ to be a basis. This space is sometimes called the \textbf{end space} of $T$.

\begin{proposition}\label{prop:whatarethey}
If $T$ is a (perfect) tree, then $\[T\]$ is a (perfect) completely ultrametrizable space. If $X$ is a (perfect) completely ultrametrizable space, then there is a (perfect) tree $T$ such that $\[T\] \homeo X$.
\end{proposition}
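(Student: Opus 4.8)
The plan is to prove the two directions of Proposition~\ref{prop:whatarethey} separately, since the statement really asserts that end spaces of trees and completely ultrametrizable spaces are the same class (and that the correspondence preserves perfectness).

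\textbf{From trees to spaces.} First I would show that $\[T\]$ is completely ultrametrizable. The natural move is to define a metric on $\[T\]$ directly from the tree structure: for distinct branches $x,y \in \[T\]$, set $d(x,y) = 2^{-n}$ where $n$ is the largest level at which $x$ and $y$ agree (that is, $x(n) = y(n)$ but $x(n+1) \neq y(n+1)$), and $d(x,x)=0$. I would check that $d$ satisfies the strong triangle inequality $d(x,z) \leq \max\{d(x,y),d(y,z)\}$, which is immediate from the fact that the level of agreement of $x$ and $z$ is at least the minimum of the two other levels of agreement. Then I would verify that the metric topology coincides with the end-space topology, since the basic open set $\[T_s\]$ for a node $s$ at level $n$ is exactly a ball of radius $2^{-n}$ around any branch through $s$. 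Completeness follows because a Cauchy sequence of branches eventually agrees on longer and longer initial segments, and since $T$ is pruned these initial segments assemble into a genuine branch that is the limit. If $T$ is perfect, then every basic open set $\[T_s\]$ splits into two disjoint nonempty basic open subsets (using the two incomparable proper extensions of $s$), so $\[T\]$ has no isolated points and is perfect.

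\textbf{From spaces to trees.} For the converse I would start with a completely ultrametrizable space $(X,d)$, where I may assume $d$ is a complete ultrametric realizing the topology, and build a tree whose end space is homeomorphic to $X$. The standard construction is to choose, for each $n$, a partition of $X$ into nonempty clopen pieces of diameter at most $2^{-n}$ refining the partition at level $n-1$; the existence of such partitions into clopen sets is exactly where the ultrametric (rather than merely metric) hypothesis is used, since in an ultrametric space the balls $B(x,2^{-n})$ form a partition of $X$ into clopen sets. I would let the nodes at level $n$ be the pieces of the $n$th partition, with the root being $X$ itself, and declare one piece a child of another when the former is contained in the latter. Pruning and nonemptiness are automatic from the construction; I would then define the homeomorphism from $X$ to $\[T\]$ by sending each point to the branch recording which piece contains it at each level, using completeness to see this branch determines the point and using the diameter bound to see the map is a bijection and a homeomorphism. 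For the perfect case, if $X$ has no isolated points I would arrange each partition piece to split into at least two pieces at the next level, making $T$ perfect.

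\textbf{Main obstacle.} The delicate point is producing, in the space-to-tree direction, partitions into clopen sets that are simultaneously nonempty, of controlled diameter, and refining — and then confirming that the resulting tree is pruned so that the homeomorphism lands in the end space of a legitimate (pruned) tree. In an ultrametric space the clopen partition by balls is the key technical convenience that makes all of this go through cleanly; the analogous statement fails for general metric spaces, so the heart of the argument is really the observation that ultrametric balls partition the space. The perfectness bookkeeping is routine once the basic correspondence is in place, so I expect the partition-construction step to be the crux.
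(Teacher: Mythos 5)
Your argument is correct, but note that it is genuinely different from what the paper does: the paper gives no proof at all for Proposition~\ref{prop:whatarethey}, deferring entirely to Hughes \cite{Huh}, where the tree/ultrametric-space correspondence is developed as a categorical equivalence. What you have written is the standard direct construction underlying that citation, and both directions go through: the level-of-agreement metric $d(x,y)=2^{-n}$ is indeed a complete ultrametric inducing the end-space topology (the sets $\[T_s\]$ are exactly the balls), and in the converse direction the partition of an ultrametric space into balls of radius $2^{-n}$, refined level by level, yields a pruned tree whose end space is homeomorphic to $X$. The citation buys the paper brevity and a more general, functorial treatment; your proof buys self-containedness and makes explicit exactly where prunedness, completeness, and the ultrametric inequality are used, which is in the spirit of the scheme arguments (Lemma~\ref{lem:schemes}) used later in the paper.

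Two small points deserve attention. First, in the tree-to-space direction, completeness of $d$ does not actually require prunedness: a Cauchy sequence of branches stabilizes level by level, and the stabilized nodes automatically form a branch; prunedness is instead what guarantees each $\[T_s\]$ (and the whole end space) is nonempty. Second, in the space-to-tree direction your plan to make "each partition piece split into at least two pieces at the next level" needs a word of justification, since a ball of radius $2^{-n}$ may already have diameter less than $2^{-(n+1)}$ and hence fail to split at the next level of the ball tree. Either subdivide such a piece by hand (a perfect piece contains two points, and a sufficiently small ball around one of them is clopen with clopen complement in the piece), or simply observe that the paper's definition of a perfect tree asks only for two incomparable proper extensions of each node, not two distinct children, so the eventual splitting of every ball containing two points already suffices. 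Neither issue is a gap in the mathematics; both are matters of stating precisely which property is doing the work.
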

\begin{proof}
See \cite{Huh} for a thorough treatment.
\end{proof}

We will say that a space $X$ is \textbf{represented} by a tree $T$ whenever $X \homeo \[T\]$. Thus Proposition~\ref{prop:whatarethey} can be rephrased by saying that the (perfect) completely ultrametrizable spaces are precisely those representable by (perfect) trees. Note that, for all cardinals $\k$, $\k^{<\w}$ represents $\k^\w$. In particular, $2^{<\w}$ represents the Cantor space and $\w^{<\w}$ represents the Baire space.

Let $T$ be a tree and let $X$ be a topological space. A $T$\textbf{-scheme} on $X$ is a family $(B_s)_{s \in T}$ of subsets of $X$ such that
\begin{itemize}
\item $B_t \sub B_s$ whenever $t$ is an extension of $s$.
\item $B_s \cap B_t = \0$ whenever $s$ and $t$ are incompatible.
\end{itemize}
If $d$ is a metric on $X$ then $(B_s)_{s \in T}$ has \textbf{vanishing diameter} (with respect to $d$) if $\lim_{n \rightarrow \infty}\mathrm{diam}(B_{x(n)}) = 0$ whenever $x \in \[T\]$. If $X$ is a metric space and $(B_s)_{s \in T}$ is a $T$-scheme with vanishing diameter, then let $D = \set{x \in \[T\]}{\bigcap_{n \in \w}A_{x(n)} \neq \0}$ and define $f: D \rightarrow X$ by $\{f(x)\} = \bigcap_{n < \w}B_{x(n)}$. We call $f$ the \textbf{associated map}.

\begin{lemma}\label{lem:schemes}
Let $(B_s)_{s \in T}$ be a $T$-scheme with vanishing diameter on a metric space $(X,d)$. If $f: D \rightarrow X$ is the associated map, then
\begin{enumerate}
\item $f$ is injective and continuous.
\item if $B_s = \bigcup \set{B_t}{t \text{ is a child of }s}$ for all $s \in T$, then $f$ is surjective.
\item if $B_s$ is open for every $s \in T$, then $f$ is open.
\end{enumerate}
\end{lemma}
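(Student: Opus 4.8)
The plan is to verify the three clauses in turn, after first confirming that $f$ is genuinely a function. For $x \in D$ the sets $B_{x(n)}$ are nested and decreasing (since $x(n+1)$ extends $x(n)$), their intersection is nonempty by the definition of $D$, and it contains at most one point because any two points in it lie within $\mathrm{diam}(B_{x(n)})$ of each other for every $n$, a quantity tending to $0$ by vanishing diameter. So $\{f(x)\} = \bigcap_n B_{x(n)}$ really does single out one point. For injectivity in part (1), suppose $x \neq y$ in $D$ and let $n$ be least with $x(n) \neq y(n)$; then $x(n)$ and $y(n)$ are distinct nodes at the same level, hence incomparable, so $B_{x(n)} \cap B_{y(n)} = \0$, and since $f(x) \in B_{x(n)}$ and $f(y) \in B_{y(n)}$ we get $f(x) \neq f(y)$. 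For continuity, fix $x \in D$ and $\e > 0$ and use vanishing diameter to pick $n$ with $\mathrm{diam}(B_{x(n)}) < \e$; the set $\[T_{x(n)}\] \cap D$ is a basic neighborhood of $x$ in $D$, and any $y$ in it passes through the level-$n$ node $x(n)$, so $f(y) \in B_{y(n)} = B_{x(n)} \ni f(x)$, giving $d(f(x),f(y)) < \e$.

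For surjectivity in part (2), observe first that every value $f(x)$ lies in $B_{x(0)} = B_{\mathrm{root}}$, so $f$ can be onto $X$ only when $B_{\mathrm{root}} = X$; granting this, I build a branch recursively. Given $z \in B_{\mathrm{root}}$, set $x(0)$ to be the root, and having chosen $x(n)$ with $z \in B_{x(n)}$, apply the covering hypothesis $B_{x(n)} = \bigcup \set{B_t}{t \text{ is a child of } x(n)}$ to pick a child $x(n+1)$ with $z \in B_{x(n+1)}$. The resulting branch $x$ satisfies $z \in \bigcap_n B_{x(n)}$, so $x \in D$ and $f(x) = z$ by the uniqueness established above.

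The openness in part (3) is where the genuine bookkeeping lies, and I expect it to be the main obstacle, largely because one must be careful about which topology the word ``open'' refers to. The key computation is the identity $f(\[T_s\] \cap D) = B_s \cap f(D)$. The inclusion from left to right is immediate, since a branch through the level-$\l{s}$ node $s$ has its $f$-image in $B_s$. For the reverse inclusion, take $y = f(x) \in B_s$ with $x \in D$; the node $x(\l{s})$ satisfies $y \in B_{x(\l{s})} \cap B_s$, so these two same-level nodes are compatible, hence equal, and therefore $x$ passes through $s$. Because each $B_s$ is open in $X$, the right-hand side $B_s \cap f(D)$ is relatively open in $f(D)$, and since the sets $\[T_s\] \cap D$ form a basis for $D$ this shows that $f$ is an open map onto its image $f(D)$ (and open into $X$ outright in the presence of the surjectivity hypothesis of part (2), where $f(D) = X$ and the image is literally $B_s$).
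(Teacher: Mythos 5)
Your proof is correct and complete. Note, however, that the paper gives no argument of its own for this lemma---it simply cites Lemma 2.3 of \cite{Brn}---so there is no internal proof to compare against; what you have written is the self-contained argument that the citation points to. Two points you handled are worth emphasizing, because the bare statement of the lemma glosses over both. First, in part (2) you correctly observed that surjectivity onto $X$ forces $B_{\mathrm{root}} = X$; this hypothesis is implicit in the statement and is satisfied in every application in the paper (for instance, the construction in Lemma~\ref{lem:denseGdelta} starts with $B_\0 = X$). Second, in part (3) you correctly read ``open'' as ``open onto the image $f(D)$'': the literal reading (images of open subsets of $D$ are open in $X$) is false in general, as one sees from a scheme of open intervals in $\R$ whose associated map has a Cantor set as its image. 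Your identity $f(\[T_s\] \cap D) = B_s \cap f(D)$, together with the fact that the sets $\[T_s\] \cap D$ form a basis for $D$, gives openness onto the image, and that is exactly what the paper needs: in its applications (again, Lemma~\ref{lem:denseGdelta}) the covering hypothesis of part (2) also holds, so $f(D) = X$ and the map is open into $X$ outright, yielding the homeomorphism claimed there.
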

\begin{proof}
See \cite{Brn}, Lemma 2.3.
\end{proof}

\section{Partitions of large spaces}\label{sec:partitions}

As mentioned in the introduction, the existence of well-behaved partitions of completely ultrametrizable spaces determines a good deal about the maps between them. In this section we prove our main lemma concerning partitions of the spaces $\w_n^\w$, and use this lemma to derive some consistency results.

\begin{lemma}\label{lem:disjointunions}
Let $T$ be any tree. Then every open subset of $\[T\]$ can be written as a disjoint union of sets of the form $\[T_s\]$.
\end{lemma}
\begin{proof}
Let $U$ be any open subset of $\[T\]$. For each $s \in \[T\]$, say $s \in A$ if and only if $\[T_s\] \sub U$ and $\[T_t\] \not\sub U$ for any $t$ such that $t \lneq s$. No two elements of $A$ are comparable, and if $x \in U$ then $x(n) \in A$ for some $n$, namely the smallest $n$ such that $\[T_{x(n)}\] \sub U$. Therefore $U = \bigcup_{s \in A}\[T_s\]$, and this is a disjoint union.
\end{proof}

\begin{lemma}\label{lem:opensubsets}
Let $\k$ be an infinite cardinal. Every nonempty open subset of $\k^\w$ is homeomorphic to $\k^\w$.
\end{lemma}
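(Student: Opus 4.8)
The plan is to reduce the statement, via Lemma~\ref{lem:disjointunions}, to a single combinatorial fact about disjoint sums of copies of $\k^\w$. Set $T = \k^{<\w}$, so that $\[T\] = \k^\w$. Given a nonempty open $U \sub \k^\w$, Lemma~\ref{lem:disjointunions} writes $U$ as a disjoint union $\bigcup_{s \in A}\[T_s\]$ for some antichain $A \sub T$. Since $U \neq \0$ we have $A \neq \0$, and since $\card{T} = \k$ (as $\k$ is infinite) we have $1 \le \card{A} \le \k$. For each $s$, the nodes extending $s$ form, under the induced order, a tree isomorphic to $\k^{<\w}$, while the finite stem below $s$ contributes nothing to the branches; hence $\[T_s\] \homeo \k^\w$, and $\[T_s\]$ is clopen in $\k^\w$. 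As the pieces $\[T_s\]$ are open in $\k^\w$ and partition $U$, the subspace topology on $U$ coincides with the disjoint-sum topology, so $U$ is homeomorphic to the disjoint sum of $\card{A}$ copies of $\k^\w$; I will denote this space by $\card{A} \times \k^\w$, with the first factor discrete.

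It therefore suffices to prove the following claim, which is the heart of the matter: for every cardinal $\mu$ with $1 \le \mu \le \k$, one has $\mu \times \k^\w \homeo \k^\w$. First I would fix a bijection $g \colon \mu \times \k \to \k$, which exists because $1 \le \mu \le \k$ forces $\card{\mu \times \k} = \k$. Then I would define $\Phi \colon \mu \times \k^\w \to \k^\w$ by absorbing the discrete coordinate into the zeroth coordinate: $\Phi(i,y)$ is the sequence whose zeroth term is $g(i, y(0))$ and whose $n$-th term, for $n \ge 1$, is $y(n)$. Intuitively, $\k^\w$ has room to encode the extra index $i$ together with $y(0)$ into a single entry of $\k$, while the infinitely many remaining coordinates are left untouched.

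The remaining work, which is where I expect the only real care to be needed, is to verify that $\Phi$ is a homeomorphism. Bijectivity is immediate, the inverse recovering the pair $(i, y(0))$ from the zeroth coordinate via $g^{-1}$ and copying the rest. For continuity and openness I would track basic clopen sets through $\Phi$: the preimage of $\set{z \in \k^\w}{t \sub z}$ is governed by the single constraint $g(i, y(0)) = t(0)$, which pins down $(i, y(0)) = g^{-1}(t(0))$ and imposes only finitely many further coordinate constraints, so the preimage is again basic open; a symmetric computation shows that $\Phi$ carries basic open sets to open sets. Hence $\Phi$ is a homeomorphism, and chaining this claim with the reduction above gives $U \homeo \card{A} \times \k^\w \homeo \k^\w$, as desired.
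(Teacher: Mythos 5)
Your proof is correct and follows essentially the same route as the paper: both apply Lemma~\ref{lem:disjointunions} to write $U$ as a disjoint union of at most $\k$ clopen sets of the form $\[\k^{<\w}_s\]$, each a copy of $\k^\w$, and then identify $U$ with $\lambda \times \k^\w \homeo \k^\w$ for some $\lambda \leq \k$. The only difference is that the paper treats the final homeomorphism $\lambda \times \k^\w \homeo \k^\w$ as immediate, whereas you verify it explicitly via the coordinate-absorbing bijection; that is a fair and correct filling-in of a step the paper leaves to the reader.
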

\begin{proof}
Let $U$ be an open subset of $\k^\w$. Since $\k^\w \homeo \[\k^{<\w}\]$, we may apply Lemma~\ref{lem:disjointunions} to obtain $U$ as a disjoint union of sets of the form $\[\k^{<\w}_s\]$. Each such set is clearly a copy of $\k^\w$, and we have at most $\k$ of them since $\card{\k^{<\w}} = \k$. Thus, for some $\lambda \leq \k$, $U \homeo \lambda \times \k^\w \homeo \k^\w$.
\end{proof}

\begin{lemma}\label{lem:thebreakup}
Let $\k$ be an infinite cardinal and $d$ any metric on $\k^\w$ compatible with its topology. If $\varepsilon > 0$, then there is a partition $\set{B_\a}{\a \in \k}$ of $\k^\w$ into $\k$ clopen sets such that $\mathrm{diam}(B_\a) < \varepsilon$ for every $\a$.
\end{lemma}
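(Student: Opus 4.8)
The plan is to use the standard clopen basis of $\k^\w$. Identifying $\k^\w$ with $\[\k^{<\w}\]$ as in Lemma~\ref{lem:opensubsets}, the sets $\[\k^{<\w}_s\]$ for $s \in \k^{<\w}$ are clopen and form a basis for the topology. Since $d$ is compatible with this topology, for each $x \in \k^\w$ the sets $\[\k^{<\w}_{x \rest n}\]$ (for $n \in \w$) form a neighborhood basis at $x$; hence for all sufficiently large $n$ the set $\[\k^{<\w}_{x \rest n}\]$ is contained in the open $d$-ball of radius $\e/3$ about $x$ and so has diameter less than $\e$. The first task is thus to choose, for each branch, an initial segment with small basic clopen set, and to do so coherently enough that the chosen sets partition the whole space.

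To this end I would mimic the proof of Lemma~\ref{lem:disjointunions}. Let
$$A = \set{s \in \k^{<\w}}{\mathrm{diam}\bigl(\[\k^{<\w}_s\]\bigr) < \e \text{ and } \mathrm{diam}\bigl(\[\k^{<\w}_t\]\bigr) \geq \e \text{ for every } t \subsetneq s}.$$
No two elements of $A$ are comparable, and by the previous paragraph every $x \in \k^\w$ has a unique initial segment lying in $A$, namely $x \rest n$ for the least $n$ with $\mathrm{diam}(\[\k^{<\w}_{x \rest n}\]) < \e$. Therefore $\set{\[\k^{<\w}_s\]}{s \in A}$ is a partition of $\k^\w$ into clopen sets, each of diameter less than $\e$, and there are $\lambda \le \card{\k^{<\w}} = \k$ of them; since $\k^\w \neq \0$ we have $\lambda \ge 1$.

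The one point requiring care is to arrange that the number of pieces is exactly $\k$ rather than merely at most $\k$: for a badly behaved metric the antichain $A$ can be small (for instance $A = \{\0\}$ when $\mathrm{diam}(\k^\w) < \e$), so I cannot simply read off $\card{A} = \k$. If $\lambda = \k$ there is nothing to do. Otherwise, fix any $s \in A$ and replace the single piece $\[\k^{<\w}_s\]$ by the $\k$ sets $\[\k^{<\w}_{s \cat \<\a\>}\]$ for $\a \in \k$; these partition $\[\k^{<\w}_s\]$ into clopen sets, each of diameter at most $\mathrm{diam}(\[\k^{<\w}_s\]) < \e$ by monotonicity of diameter. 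The new family is again a clopen partition of $\k^\w$ into sets of diameter less than $\e$, and it has between $\k$ and $\lambda + \k = \k$ members, hence exactly $\k$ since $\k$ is infinite and $\lambda \le \k$. Re-indexing by $\k$ then gives the desired partition $\set{B_\a}{\a \in \k}$.
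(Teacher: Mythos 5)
Your proof is correct, but there is nothing in the paper to compare it against: the paper proves this lemma by citation only (``See \cite{Brn}, Lemma 6.7''), so your argument supplies a self-contained proof that the paper omits. Both of your ingredients are sound. First, the antichain $A$ of minimal nodes $s$ with $\mathrm{diam}\bigl(\[\k^{<\w}_s\]\bigr) < \e$ is exactly the device of Lemma~\ref{lem:disjointunions}, applied to the open cover of $\k^\w$ by basic clopen sets of small diameter: compatibility of $d$ guarantees that every branch has an initial segment in $A$, minimality makes $A$ an antichain, and hence $\set{\[\k^{<\w}_s\]}{s \in A}$ is a partition of $\k^\w$ into at most $\card{\k^{<\w}} = \k$ clopen pieces of diameter less than $\e$. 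Second, you correctly isolated the one point a careless argument would miss: $\card{A}$ can be as small as $1$ (e.g.\ for a bounded compatible metric), so the count must be padded up to exactly $\k$. Splitting a single piece $\[\k^{<\w}_s\]$ into the child cylinders $\[\k^{<\w}_{s \cat \<\a\>}\]$, $\a \in \k$, accomplishes this: they are nonempty (the tree is pruned), clopen, pairwise disjoint, their union is $\[\k^{<\w}_s\]$, each has diameter at most $\mathrm{diam}\bigl(\[\k^{<\w}_s\]\bigr) < \e$ by monotonicity of diameter under inclusion, and infinite cardinal arithmetic gives exactly $\k$ pieces in the refined partition.
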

\begin{proof}
See \cite{Brn}, Lemma 6.7.
\end{proof}

\begin{lemma}\label{lem:denseGdelta}
Let $\k$ be an infinite cardinal. If $X$ is a dense $G_\delta$ subset of $\k^\w$, then $X \homeo \k^\w$.
\end{lemma}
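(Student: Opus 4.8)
The plan is to realize $X$ as the image of the associated map of a carefully engineered $\k^{<\w}$-scheme on $\k^\w$, with the scheme arranged so that its branches land exactly on the points of $X$. First I fix the standard complete ultrametric $d$ on $\k^\w$ given by $d(x,y)=2^{-n}$ for the least $n$ with $x(n)\ne y(n)$; its balls are precisely the cylinders $\[\k^{<\w}_s\]$, which are clopen. Since $X$ is a dense $G_\delta$, I may write $X=\bigcap_{n\in\w}U_n$ with each $U_n$ open and dense and $\k^\w=U_0\supseteq U_1\supseteq\cdots$.

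Next I construct nonempty clopen sets $B_s\su\k^\w$ for $s\in\k^{<\w}$ by recursion on $\l{s}$, maintaining $B_\0=\k^\w$, $\mathrm{diam}(B_s)<2^{-\l{s}}$, and the crucial invariant that the children of $s$ form a partition of $B_s\cap U_{\l{s}+1}$ into exactly $\k$ nonempty clopen pieces of diameter $<2^{-(\l{s}+1)}$. This is always possible: $B_s\cap U_{\l{s}+1}$ is a nonempty open subset of $\k^\w$ (nonempty because $U_{\l{s}+1}$ is dense and $B_s$ is a nonempty open set), so by Lemma~\ref{lem:disjointunions} it is a disjoint union of cylinders; refining each cylinder to length $\l{s}+1$ and, if fewer than $\k$ pieces result, splitting one of them into $\k$ subcylinders, I obtain exactly $\k$ nonempty clopen pieces of the required diameter (this is in essence Lemma~\ref{lem:thebreakup} combined with Lemma~\ref{lem:opensubsets}). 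Because the $B_s$ are closed with vanishing diameter along every branch and $(\k^\w,d)$ is complete, each branch $x$ has $\bigcap_n B_{x(n)}$ equal to a single point; hence the domain of the associated map $f$ is all of $\[\k^{<\w}\]=\k^\w$, and by Lemma~\ref{lem:schemes} parts (1) and (3), $f$ is injective, continuous, and open.

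It remains to identify the image of $f$ with $X$. For one inclusion, each child $B_t$ of $s$ is contained in $U_{\l{s}+1}$ by construction, so for any branch $x$ the point $f(x)$ lies in $B_{x(n+1)}\su U_{n+1}$ for all $n$ and in $B_{x(0)}=U_0$, whence $f(x)\in\bigcap_n U_n=X$. For the other, given $p\in X$ I build a branch through $p$: having arranged $p\in B_{x(n)}$, the fact that $p\in X\su U_{n+1}$ places $p$ in $B_{x(n)}\cap U_{n+1}$, which the children of $x(n)$ partition, so exactly one child contains $p$ and I take it to be $x(n+1)$; then $f(x)=p$. Thus $f$ is a continuous open bijection from $\k^\w$ onto $X$, i.e.\ a homeomorphism. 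The one subtlety requiring care is that the children of $s$ cover only $B_s\cap U_{\l{s}+1}$ rather than all of $B_s$, so the clean surjectivity criterion of Lemma~\ref{lem:schemes}(2) does not apply; surjectivity onto $X$ must instead be verified directly as above, the successive trimming by the $U_n$ being exactly the mechanism that discards the points of $\k^\w\sm X$.
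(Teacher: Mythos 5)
Your proof is correct, but it takes a genuinely different route from the paper's. The paper works \emph{intrinsically}: it first quotes the characterization of completely ultrametrizable spaces as the zero-dimensional \v{C}ech complete metrizable spaces (all three properties being inherited by $G_\delta$ subsets), fixes a complete metric on $X$ itself, and then builds a $\k^{<\w}$-scheme of clopen subsets \emph{of $X$} in which the children of each $B_s$ cover all of $B_s$ — so surjectivity comes for free from Lemma~\ref{lem:schemes}(2), and nonemptiness of branch intersections comes from completeness of the metric on $X$. You instead work \emph{extrinsically} in the ambient space: you keep the complete metric on $\k^\w$, write $X=\bigcap_n U_n$ with the $U_n$ open dense, and trim each $B_s$ by $U_{\l{s}+1}$ before splitting, so that the branches of the scheme sweep out exactly $X$; you then correctly observe that Lemma~\ref{lem:schemes}(2) no longer applies and verify surjectivity onto $X$ by hand (the two inclusions $f(x)\in\bigcap_n U_n$ and every $p\in X$ lying on a unique branch). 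What your approach buys is self-containedness: you never need \v{C}ech completeness, Lemin's characterization, or the fact that $G_\delta$ subsets of completely metrizable spaces are completely metrizable — only the ambient complete metric and the explicit $G_\delta$ presentation. What the paper's approach buys is a cleaner scheme argument, with surjectivity and vanishing diameter handled wholesale by the quoted lemma, and the density of $X$ used once to get $\k$-fold clopen splittings inside $X$. One cosmetic slip on your side: with the metric $d(x,y)=2^{-n}$, a cylinder of level $\l{s}+1$ has diameter exactly $2^{-(\l{s}+1)}$, so to get diameter strictly less than $2^{-(\l{s}+1)}$ you should refine to cylinders of level at least $\l{s}+2$; this changes nothing structural.
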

\begin{proof}
Recall (e.g., from \cite{AJL}, Corollary 5) that a metric space is completely ultrametrizable if and only if it is zero-dimensional and \v{C}ech complete. Since metrizability, zero-dimensionality, and \v{C}ech completeness are all inherited by $G_\delta$ subsets, $X$ is completely ultrametrizable.

Suppose that $U$ is any clopen subset of $X$. There is some $s \in \k^{<\w}$ such that $\[\k^{<\w}_s\] \cap X \sub U$. Because $X$ is dense in $\k^\w$, $\[\k^{<\w}_{s \cat \a}\] \cap X$ is a nonempty clopen subset of $U$ for every $\a < \k$. Consequently, $\{X \setminus \[\k^{<\w}_s\]\} \cup \set{\[\k^{<\w}_{s \cat \a}\] \cap X}{\a \in \k}$ is a partition of $U$ into clopen sets. Thus every open neighborhood of $U$ can be partitioned into $\k$ disjoint clopen subsets.

Fix a compatible complete metric on $X$. We will show that $X \homeo \k^\w$ by constructing an appropriate $\k^{<\w}$-scheme in $X$.

Let $B_\0 = X$ and fix $s \in \k^{<\w}$. Suppose $B_s$ has been defined, is a clopen subset of $X$, and has diameter at most $\frac{1}{\l{s}+1}$. As we have already noted, it is possible to partition $B_s$ into $\k$ disjoint clopen subsets. If necessary, we may use Lemma~\ref{lem:thebreakup} to partition each of these further into sets smaller than $\frac{1}{\l{s}+2}$; thus we may assume that $B_s$ is partitioned into $\k$ clopen sets, each smaller than $\frac{1}{\l{s}+2}$. Enumerate these as $\set{B_{s \cat \a}}{\a \in \k}$ to define $B_t$ for every child $t$ of $s$. By recursion, this defines a $\k^{<\w}$-scheme $(B_s)_{s \in \k^{<\w}}$ in $X$.

By construction, this scheme has vanishing diameter, $B_\0 = X$, each $B_s$ is clopen, and $B_s = \bigcup \set{B_t}{t \text{ is a child of } s}$ for every $s$. By Lemma \ref{lem:schemes}, the associated map of $(B_s)_{s \in \k^{<\w}}$ is a homeomorphism. It remains to show that the domain of this map is $X$: that is, we must show that $\bigcap_{n \in \w}B_{x(n)} \neq \0$ for every $x \in \[\k^{<\w}\]$. Since each $B_s$ is clopen, we have $\bigcap_{n \in \w}B_{x(n)} = \bigcap_{n \in \w}\closure{B_{x(n)}}$, and this is nonempty because the $B_{x(n)}$ have decreasing diameter with respect to a complete metric.
\end{proof}

Note that, if we set $\k = \w$, then Lemma~\ref{lem:denseGdelta} reduces to a variant of a classical theorem of Alexandrov and Urysohn (see \cite{Kec}, Theorem 7.7).

\begin{theorem}\label{thm:inductivedivisions}
Let $\a$ be any ordinal. Then $\w_{\a+1}^\w$ can be partitioned into $\aleph_{\a+1}$ homeomorphic copies of $\w_\a^\w$.
\end{theorem}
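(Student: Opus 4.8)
The plan is to partition $\w_{\a+1}^\w$ according to the supremum of the coordinates of a point, exploiting the fact that $\w_{\a+1} = \aleph_\a^+$ is a regular cardinal. The crucial preliminary observation is that, since $\w_{\a+1}$ has uncountable cofinality, every $x \in \w_{\a+1}^\w$ is bounded, i.e. $\sup_n x(n) < \w_{\a+1}$. This lets me index the pieces by the possible suprema lying in $[\w_\a, \w_{\a+1})$, a set of size $\aleph_{\a+1}$. Concretely I would set $X_{\w_\a} = \set{x \in \w_{\a+1}^\w}{\sup_n x(n) \le \w_\a}$ and, for each $\g$ with $\w_\a < \g < \w_{\a+1}$, $X_\g = \set{x \in \w_{\a+1}^\w}{\sup_n x(n) = \g}$. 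These $\aleph_{\a+1}$ sets visibly partition $\w_{\a+1}^\w$: a point whose supremum is at most $\w_\a$ lands in $X_{\w_\a}$, and every other point lands in the unique $X_\g$ with $\g$ its (strictly larger) supremum.

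The heart of the argument is to check that each piece is homeomorphic to $\w_\a^\w$. For every $\g$ in the range $[\w_\a, \w_{\a+1})$ we have $\aleph_\a = \card{\g} = \card{\g+1}$, so the discrete space $\g+1$ is homeomorphic to $\w_\a$ and hence $(\g+1)^\w \homeo \w_\a^\w$. Since $X_{\w_\a} = (\w_\a+1)^\w$, this already settles the bottom piece. For $\g > \w_\a$ the set $X_\g$ lives inside $(\g+1)^\w \homeo \w_\a^\w$, and I would finish by distinguishing two cases according to $\cf(\g)$. If $\g$ is a successor or has uncountable cofinality, then every countable subset of $\g$ is bounded strictly below $\g$, so $\sup_n x(n) = \g$ forces some coordinate to equal $\g$; thus $X_\g = \bigcup_n \set{x}{x(n) = \g}$ is a nonempty open subset of $(\g+1)^\w$, and Lemma~\ref{lem:opensubsets} gives $X_\g \homeo \w_\a^\w$. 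If instead $\cf(\g) = \w$, fix a countable sequence cofinal in $\g$; then $X_\g$ is an intersection of countably many open sets, hence $G_\delta$ in $(\g+1)^\w$, and it is dense there because any finite sequence can be extended by the constant value $\g$. Lemma~\ref{lem:denseGdelta} then yields $X_\g \homeo \w_\a^\w$.

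The step I expect to require the most care is the treatment of points with small supremum, which is precisely why the bottom piece is defined with $\le \w_\a$ rather than $= \w_\a$. When $\cf(\w_\a) = \w$ (for instance when $\a = 0$), the set $\set{x}{\sup_n x(n) < \w_\a}$ of bounded sequences is a countable increasing union of closed nowhere dense subsets, hence meager in itself and not a Baire space, so it cannot be homeomorphic to $\w_\a^\w$ on its own. The device that resolves this is to absorb these stray points into the bottom piece: $\set{x}{\sup_n x(n) < \w_\a}$ together with $\set{x}{\sup_n x(n) = \w_\a}$ is exactly $\set{x}{\sup_n x(n) \le \w_\a} = (\w_\a+1)^\w$, a clean copy of $\w_\a^\w$. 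This merging works uniformly, irrespective of the cofinality of $\w_\a$, and allows the supremum partition to close up into exactly $\aleph_{\a+1}$ homeomorphic copies of $\w_\a^\w$.
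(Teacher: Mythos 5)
Your proof is correct and takes essentially the same approach as the paper's: both stratify $\w_{\a+1}^\w$ by how high a sequence reaches (using regularity of $\w_{\a+1}$ to see every sequence is bounded), then apply Lemma~\ref{lem:opensubsets} when the bound is attained and Lemma~\ref{lem:denseGdelta} in the countable-cofinality case. The only differences are cosmetic: the paper indexes pieces by the least $\b \geq \w_\a$ with $x \in \b^\w$, so pieces at ordinals of uncountable cofinality come out empty, whereas your indexing by the attained supremum makes every piece nonempty, and your direct density check (append the constant value $\g$) replaces the paper's appeal to the Baire Category Theorem.
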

\begin{proof}
Fix an ordinal $\a$. For every $\b$ with $\w_\a \leq \b < \w_{\a+1}$, let
$$X_\b = \b^\w \setminus \bigcup \set{\gamma^\w}{\w_\a \leq \gamma < \b}.$$
Because $\w_{\a+1}$ is a regular uncountable cardinal, the range of every $x \in \w_{\a+1}^\w$ is bounded by some $\b < \w_{\a+1}$, so that $x \in \b^\w$. Therefore $\set{X_\b}{\w_\a \leq \b < \w_{\a+1}}$ is a partition of $\w_{\a+1}^\w$. Of course $X_{\w_\a} = \w_\a^\w$. For $\b \neq \w_\a$, we will show that $X_\b$ is empty if and only if $\b$ has uncountable cofinality, and otherwise is homeomorphic to $\w_\a^\w$.

First suppose that $\b$ has uncountable cofinality. Then every element of $\b^\w$ is bounded inside some $\gamma^\w$, $\gamma < \b$, so that $\b^\w = \bigcup_{\gamma < \b}\gamma^\w$. If $\b \neq \w_\a$, then this implies $X_\b = \0$.

Next suppose that $\b = \gamma+1$ is a successor ordinal. Because $\gamma$ is closed in $\b$, $\gamma^\w$ is closed in $\b^\w$. Also, $\b^\w \setminus \gamma^\w \neq \0$ (explicitly, $\b^\w \setminus \gamma^\w$ is the set of all sequences in $\b$ with the point $\gamma$ in their range). By Lemma~\ref{lem:opensubsets}, $X_\b \homeo \w_\a^\w$.

Finally, suppose that $\b$ is a limit ordinal with countable cofinality. Let $\seq{\gamma_n}{n < \w}$ be a sequence of ordinals with limit $\b$. As in the previous paragraph, each $\gamma_n^\w$ is closed in $\b^\w$. Also, each $\gamma_n^\w$ is nowhere dense in $\b^\w$. To see this, let $s \in \b^{<\w}$ so that $U = \[\b^{<\w}_s\]$ is a basic open set of $\b^\w$, and consider that $\[\b^{<\w}_{s \cat \gamma_n}\]$ is an open subset of $U$ that is disjoint from $\gamma_n^\w$. Since $X_\b = \b^\w \setminus \bigcup_{n < \w}\gamma_n^\w$, $X$ is $G_\delta$ in $\b^\w$, and $X$ is dense in $\b^\w$ by the Baire Category Theorem. Since $\b^\w \homeo \w_\a^\w$, it follows from Lemma~\ref{lem:denseGdelta} that $X_\b \homeo \w_\a^\w$.
\end{proof}

Theorem~\ref{thm:inductivedivisions} can be seen as a topological version of the basic fact of cardinal arithmetic that $\aleph_{\a+1}^{\aleph_0} = \aleph_{\a+1} \cdot \aleph_\a^{\aleph_0}$.

\begin{corollary}\label{cor:inductivedivisions}
Let $n < \w$. Then $\w_n^\w$ can be partitioned into $\aleph_n$ copies of $\N$.
\end{corollary}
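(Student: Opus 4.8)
The plan is a short induction on $n$, peeling off one subscript at a time using Theorem~\ref{thm:inductivedivisions} together with the transitivity of ``can be partitioned into copies of.''

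For the base case $n = 0$, I would simply observe that $\w_0^\w = \w^\w = \N$, and that $\N$ is the disjoint union of its $\aleph_0$ many basic clopen sets $\[\w^{<\w}_{\langle k \rangle}\]$ for $k \in \w$, each of which is homeomorphic to $\N$. This already exhibits $\N$ as a partition into $\aleph_0$ copies of itself.

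For the inductive step, I would assume that $\w_n^\w$ can be partitioned into $\aleph_n$ copies of $\N$, and then apply Theorem~\ref{thm:inductivedivisions} with $\a = n$ to partition $\w_{n+1}^\w$ into $\aleph_{n+1}$ homeomorphic copies of $\w_n^\w$. I would then refine this partition: within each of these copies, I transport the partition supplied by the inductive hypothesis across a homeomorphism with $\w_n^\w$, splitting that copy into $\aleph_n$ copies of $\N$. The collection of all the resulting pieces is again a partition of $\w_{n+1}^\w$, and every piece is a copy of $\N$ (a homeomorphic image of a copy of $\N$ is a copy of $\N$), so the total number of pieces is $\aleph_{n+1} \cdot \aleph_n$.

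The one point I would spell out is the cardinal count: since $\aleph_n \le \aleph_{n+1}$ and both are infinite, $\aleph_{n+1} \cdot \aleph_n = \aleph_{n+1}$, so the refined partition consists of exactly $\aleph_{n+1}$ copies of $\N$, completing the induction. I do not expect any genuine obstacle; all the substantive work is done by Theorem~\ref{thm:inductivedivisions}, and the corollary is essentially its iterated form combined with this elementary arithmetic. The only thing to state carefully is that refining a partition by partitioning each block preserves the partition property, and that being ``a copy of $\N$'' is closed under the homeomorphisms used to transport the blocks.
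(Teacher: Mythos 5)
Your proposal is correct and is essentially the paper's own proof: the paper's argument is exactly the induction on $n$ using Theorem~\ref{thm:inductivedivisions}, which it leaves to the reader and you have simply written out in full (base case via basic clopen sets, refinement of each block via the inductive hypothesis, and the count $\aleph_{n+1} \cdot \aleph_n = \aleph_{n+1}$). No gaps.
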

\begin{proof}
By induction, using Theorem~\ref{thm:inductivedivisions}
\end{proof}

Corollary~\ref{cor:inductivedivisions} cannot be extended to $\k \geq \aleph_\w$ using \textit{ZFC} alone, because the induction breaks down at the first singular cardinal. In fact, it always fails when $\k$ has countable cofinality:

\begin{theorem}
If $\ka$ is an uncountable cardinal with cofinality $\om$,  then
$\ka^\om$ is not the union of $\ka$ many subspaces homeomorphic
to $\N$.
\end{theorem}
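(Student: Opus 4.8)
The plan is to argue by contradiction, constructing a single branch that escapes all the pieces. Suppose $\ka^\om = \bigcup_{i<\ka} Y_i$ with each $Y_i \cong \N$. The first move is to replace each $Y_i$ by a manageable closed envelope: since $\N$ is separable, each $Y_i$ has a countable dense subset $D_i$, and letting $T_i = \set{d\rest n}{d \in D_i,\ n < \om}$ be its tree of initial segments, $T_i$ is a \emph{countable} subtree of $\ka^{<\om}$ with $Y_i \sub \overline{Y_i} = \[T_i\]$. Hence $\bigcup_{i<\ka}\[T_i\] \supseteq \ka^\om$, and it suffices to build $x \in \ka^\om$ with $x \notin \[T_i\]$ for every $i < \ka$. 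The feature being exploited is that each node of $T_i$ has only countably many children in $T_i$, whereas in $\ka^{<\om}$ every node has $\ka$ children.

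Next I would use the hypothesis $\cf(\ka) = \om$ to fix an increasing sequence of cardinals $\seq{\ka_m}{m<\om}$ with supremum $\ka$, and partition the index set as $\ka = \bigcup_{m<\om} P_m$ with $P_0 = \ka_0$ and $P_{m+1} = \ka_{m+1} \sm \ka_m$, so that $\card{P_m} < \ka$ for every $m$. I then define $x$ by recursion, choosing $x(m)$ at stage $m$ so as to defeat every tree indexed by $P_m$ at once. Writing $t = x\rest m$ for the part already built, set
$$F_m = \set{\gamma < \ka}{t\cat\gamma \in T_i \text{ for some } i \in P_m}.$$
Since each $T_i$ is countable, $\card{F_m} \leq \card{P_m}\cdot\aleph_0 < \ka$, so I may pick $x(m) = \gamma_m \in \ka \sm F_m$. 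For every $i \in P_m$ this forces $t\cat\gamma_m \notin T_i$ (either because $\gamma_m \notin F_m$, or, if $t \notin T_i$ already, because $T_i$ is closed under initial segments), and hence $x\rest(m{+}1) \notin T_i$. As $T_i$ is a tree, this guarantees $x \notin \[T_i\]$ permanently.

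Carrying the recursion through all $m < \om$ finishes the argument: each $i < \ka$ lies in exactly one $P_m$ and is eliminated at stage $m$, so $x \notin \[T_i\]$ for all $i < \ka$, contradicting $x \in \ka^\om = \bigcup_{i<\ka}\[T_i\]$. The crucial step — and the only place the cofinality hypothesis is used — is the partition of $\ka$ into countably many pieces each of size $<\ka$. This is precisely what permits all $\ka$ requirements to be discharged in $\om$ stages while forbidding fewer than $\ka$ ordinals at each stage, leaving a legal value $\gamma_m < \ka$ to choose. For regular $\ka$ no such partition exists, which is consistent with the fact (Corollary~\ref{cor:inductivedivisions}) that the covering \emph{is} possible in that case.
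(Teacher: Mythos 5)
Your proof is correct and takes essentially the same approach as the paper's: both arguments confine each copy of $\N$ to a closed subset of $\ka^\om$ coded by countably many ordinals (you use the closure $\[T_i\]$ of a countable dense subset, the paper uses $\Gamma_\alpha^\om$ for a countable $\Gamma_\alpha$ obtained from countability of the projections), and then diagonalize along a cofinal $\om$-sequence, at each stage avoiding a set of fewer than $\ka$ forbidden ordinals. The only difference is bookkeeping: the paper defeats each index $\alpha$ at every stage $n$ with $\kappa_n > \alpha$, whereas you defeat each index at exactly one stage and invoke closure of $T_i$ under initial segments to make the defeat permanent --- both are sound.
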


\begin{proof}
If $X\su \ka^\om$ is homeomorphic to $\N$,
then $X\su \Gamma^\om$ for some countable $\Gamma\su \ka$.
To see this, note that for any $n<\om$ if
$\pi_n:\ka^\om\to \ka$ is the projection map $\pi(x)=x(n)$,
then $\pi_n(X)$ must be countable since otherwise $X$ would contain
an uncountable family of pairwise disjoint open sets.

Suppose $X_\al\su \ka^\om$ for $\al<\ka$ are homeomorphic 
copies of $\N$.  Let
$\Gamma_\al\su\ka$ be countable with $X_\al\su\Gamma_\al^\om$.
Let $\ka_n$ be a cofinal sequence in $\ka$ and choose
$$x(n)\in \ka\sm\bigcup_{\al<\ka_n}\Gamma_\al$$
for each $n$.
Then $x\in \ka^\om \sm \bigcup_{\al<\ka} X_\al$.
\end{proof}

Note that
for any $\ka$ the space $\ka^\om$ can be partitioned into
$|\ka^\om|$ copies of $\N$.  This is because $\om^\om\times\ka^\om$
is homeomorphic to $\ka^\om$.  
Hence, assuming GCH, for every
$\ka$ with uncountable cofinality the space $\ka^\om$ 
can be partitioned into $\ka$ many copies of $\N$.
This would also follow from
the weaker assumption that $|\ka^\om|=\ka$ for every
$\ka>\aleph_\om$ with uncountable cofinality.

\begin{question}
Suppose $\aleph_\om<\ka<\continuum$ and $\ka$ has uncountable
cofinality,  then can
$\ka^\om$ be partitioned into $\ka$ many copies of $\N$?
\end{question}

In \cite{Brn} it is proved that $\N$ is a condensation of $\k^\w$ whenever $\N$ can be partitioned into $\k$ Borel sets. Theorem~\ref{thm:inductivedivisions} allows us to prove a partial converse.

\begin{theorem}\label{thm:unexpectedconverse}
Let $n < \w$. Then $\N$ is a condensation of $\w_n^\w$ if and only if $\N$ can be partitioned into $\aleph_n$ Borel sets.
\end{theorem}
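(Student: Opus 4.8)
For the easier direction, I would assume $\N$ can be partitioned into $\aleph_n$ Borel sets and construct a condensation $f: \w_n^\w \to \N$. This direction is already known (the excerpt says it was proved in \cite{Brn}: $\N$ is a condensation of $\k^\w$ whenever $\N$ can be partitioned into $\k$ Borel sets). So I would simply cite that result with $\k = \w_n$, observing that $\aleph_n$ Borel sets is exactly the hypothesis needed. The substantive content of this theorem is the converse.

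For the harder direction, I would assume $f: \w_n^\w \to \N$ is a condensation (continuous bijection) and produce a partition of $\N$ into $\aleph_n$ Borel sets. The natural idea is to transport a nice partition of $\w_n^\w$ across $f$. By Corollary~\ref{cor:inductivedivisions}, $\w_n^\w$ can be partitioned into $\aleph_n$ copies of $\N$, say $\set{Y_\a}{\a < \aleph_n}$. Setting $Z_\a = f[Y_\a]$ gives a partition of $\N$ (since $f$ is a bijection) into $\aleph_n$ pieces. The crux is then to argue that each $Z_\a$ is Borel. Here I would use the fact that a continuous injective image of a Borel set in a Polish space is Borel (the Lusin--Souslin theorem). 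The obstacle is that $f$ is defined on $\w_n^\w$, which is \emph{not} Polish when $n \geq 1$, so the classical Lusin--Souslin theorem does not apply directly. To get around this, I would restrict attention to each piece $Y_\a \homeo \N$ separately: the restriction $f \rest Y_\a$ is a continuous injection from a Polish space (a copy of $\N$) into $\N$, and $Y_\a$ is Borel in itself, so $Z_\a = f[Y_\a]$ is an analytic, indeed Borel, subset of $\N$ by Lusin--Souslin applied to this restriction.

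\textbf{The main obstacle} I expect is handling the non-Polish domain correctly and confirming that the Borel-image machinery genuinely applies to each piece. The key realization that makes it work is that we do not need $f$ to behave well on all of $\w_n^\w$ at once; we only need each individual copy $Y_\a$ of $\N$, which \emph{is} Polish, and on that copy $f$ is a continuous injection into the Polish space $\N$. One must check that the homeomorphism $Y_\a \homeo \N$ combined with $f \rest Y_\a$ yields a genuine continuous injection between Polish spaces so that Lusin--Souslin yields Borelness of the image. A secondary point to verify is that $\aleph_n$ is exactly the right cardinality of pieces: we get $\aleph_n$ pieces from Corollary~\ref{cor:inductivedivisions}, matching the statement precisely, so no collapsing or padding of the partition is needed.
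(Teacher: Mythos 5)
Your proposal is correct and follows essentially the same route as the paper: the forward direction is cited from \cite{Brn} (Theorem 6.2), and the converse transports the partition of $\w_n^\w$ into $\aleph_n$ copies of $\N$ given by Corollary~\ref{cor:inductivedivisions} through the condensation $f$, using the Lusin--Souslin theorem on each piece. Your explicit observation that Lusin--Souslin must be applied piecewise because $\w_n^\w$ itself is not Polish is exactly the (implicit) point that makes the paper's one-line application of \cite{Kec}, Theorem 15.1 legitimate.
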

\begin{proof}
Theorem 6.2 of \cite{Brn} states that $\N$ is a condensation of $\k^\w$ whenever $\N$ can be partitioned into $\k$ Borel sets. For the converse, suppose that $f: \w_n^\w \to \N$ is a condensation. By Corollary~\ref{cor:inductivedivisions}, there is a partition $\set{N_\a}{\a \in \w_n}$ of $\w_n^\w$ into $\aleph_n$ copies of $\N$. By an old theorem of Lusin and Souslin, every bijective continuous image of $\N$ is Borel (see \cite{Kec}, Theorem 15.1). Thus $\set{f(N_\a)}{\a \in \w_n}$ is a partition of $\N$ into $\aleph_n$ Borel sets.
\end{proof}

The following corollary answers Question 6.4 from \cite{Brn}.

\begin{corollary}
It is relatively consistent with \textit{ZFC} that
$\continuum=\om_3$ and there is no condensation $\om_2^\w \to \N$.
\end{corollary}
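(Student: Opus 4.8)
The plan is to reduce the statement to a pure partition problem and then invoke the forcing machinery flagged as Theorem~\ref{thm:lotsofpartitions}. First I would apply Theorem~\ref{thm:unexpectedconverse} in the case $n=2$: it asserts that $\N$ is a condensation of $\w_2^\w$ if and only if $\N$ can be partitioned into $\aleph_2$ Borel sets. Taking contrapositives, ``there is no condensation $\om_2^\w \to \N$'' is \emph{equivalent} to ``$\N$ cannot be partitioned into $\aleph_2$ Borel sets.'' Thus it suffices to exhibit a single model of \textit{ZFC} in which $\continuum = \om_3$ and $\N$ admits no partition into $\aleph_2$ Borel sets; the corollary then follows at once.

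Next I would produce such a model using Theorem~\ref{thm:lotsofpartitions}. Starting from a ground model of GCH, I would choose the parameters of the construction so that $\continuum$ is forced to equal $\aleph_3$ while the collection of uncountable cardinals $\kappa \le \continuum$ for which $\N$ can be partitioned into $\kappa$ Borel sets is arranged to omit $\aleph_2$ (for instance, allowing only $\aleph_1$ and $\continuum = \aleph_3$ themselves). Some such device is genuinely needed, since in other models with $\continuum = \om_3$ a partition of $\N$ into $\aleph_2$ Borel sets does exist; the failure at $\aleph_2$ is therefore an independence phenomenon that must be forced rather than proved in \textit{ZFC} alone.

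The main obstacle lies entirely in the negative half of this construction. Forcing $\continuum = \aleph_3$ is routine, but one must guarantee that the iteration does not \emph{inadvertently} add a partition of $\N$ into exactly $\aleph_2$ Borel sets, and this is precisely the preservation phenomenon that Theorem~\ref{thm:lotsofpartitions} is engineered to control. The key point is that any Borel partition of $\N$ in the extension is coded by a family of Borel codes, and, using the chain condition of the iteration together with the absoluteness of Borel codes, such a family is captured in a proper initial segment of the construction, where the number of pieces is pinned to one of the allowed values and hence cannot be $\aleph_2$. Combining the resulting model with the equivalence established in the first paragraph yields the desired consistency statement, answering Question 6.4 of \cite{Brn}.
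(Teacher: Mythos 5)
Your first paragraph is exactly right and matches the paper's strategy: by Theorem~\ref{thm:unexpectedconverse} with $n=2$, the nonexistence of a condensation $\w_2^\w \to \N$ is equivalent to the nonexistence of a partition of $\N$ into $\aleph_2$ Borel sets, so the problem reduces to exhibiting a model with $\continuum = \w_3$ and no such partition.

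The construction you propose for that model, however, invokes the wrong theorem, and in fact one that does the opposite of what you need. Theorem~\ref{thm:lotsofpartitions} is a \emph{positive} partition result: it produces models in which, for \emph{every} $\k \leq \continuum$, there \emph{is} a partition of $\C$ (hence of $\N$, via a co-countable copy) into $\k$ closed sets. It has no parameters that let you ``omit'' $\aleph_2$ from the list of achievable partition sizes; in its model with $\continuum = \w_3$ there certainly is a partition of $\N$ into $\aleph_2$ Borel sets, and therefore a condensation $\w_2^\w \to \N$ \emph{does} exist there. The ``preservation phenomenon'' you describe (c.c.c.\ plus absoluteness of Borel codes pinning the number of pieces to allowed values) is not something that theorem establishes or is engineered for. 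The tool the paper actually uses is Theorem 3.7 of \cite{AM2} (restated in the paper immediately after this corollary): if $\w_3$ Cohen reals are added to a model of \textit{CH}, then $\continuum = \w_3$ and any family of Borel sets covering $\N$ whose size lies strictly between $\aleph_1$ and $\continuum$ has a subfamily of size $\aleph_1$ that already covers $\N$ --- which rules out a partition into exactly $\aleph_2$ Borel pieces, since the members outside such a subfamily would be nonempty yet disjoint from a cover of the whole space. Transferring this from $\C$ to $\N$ via a co-countable copy and then applying Theorem~\ref{thm:unexpectedconverse} completes the proof; your argument needs this Cohen-real model in place of the model of Theorem~\ref{thm:lotsofpartitions}.
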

\begin{proof}
In \cite{AM2}, Theorem 3.7, the second author proves that if $\w_3$ Cohen reals are added to a model of \textit{CH} then there is no partition of $\C$ into $\aleph_2$ Borel sets. Since $\N$ can be identified with a co-countable subset of $\C$, this model also has no partition of $\N$ into $\aleph_2$ Borel sets. It follows from Theorem~\ref{thm:unexpectedconverse} that this model has no condensation $\w_2^\w \to \N$.
\end{proof}

Contrast this result with the result of Hausdorff in \cite{Hsd}, where it is proved from \textit{ZFC} that $\N$ can be partitioned into $\aleph_1$ Borel sets (and hence $\N$ is a condensation of $\w_1^\w$). The next theorem gives the opposite consistency result:

\begin{theorem}\label{thm:lotsofpartitions}
It is consistent with any possible value of $\continuum$ that for every $\k \leq \continuum$ there is a partition of $\C$ into $\k$ closed sets.
\end{theorem}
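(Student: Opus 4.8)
The plan is to construct a forcing model that, for every cardinal $\k \le \continuum$, contains a partition of $\C$ into exactly $\k$ closed sets. The natural strategy, following the second author's techniques in \cite{AM1}, is to build the partitions by forcing rather than to argue about them in a fixed ground model. The key observation is that a partition of $\C$ into $\k$ closed sets is a reasonably absolute object: once we have a family $\set{C_\xi}{\xi < \k}$ of closed sets that is forced to be a partition, its being a partition is witnessed by statements ($\bigcup_\xi C_\xi = \C$ and pairwise disjointness) that are preserved under further forcing, provided the forcing adds no new reals at the relevant stages. So the first step is to fix a ground model of $\textit{GCH}$ (or whatever is needed to realize the target value of $\continuum$) and design an iteration or product whose goal is to adjoin the requisite partitions generically.

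**Next I would** address the main combinatorial device: a single forcing notion $\p_\k$ whose conditions are approximations to a partition of $\C$ into $\k$ closed sets, ordered by end-extension. A condition should specify, on some countable (or small) subfamily of indices, closed sets together with a promise about how the remaining points of $\C$ will eventually be distributed. The design must guarantee two things simultaneously --- that the generic object covers all of $\C$ (surjectivity of the partition) and that it uses all $\k$ pieces nontrivially (so we get a partition into \emph{exactly} $\k$ closed sets, not fewer). To handle all $\k \le \continuum$ at once, I would take a finite-support or countable-support product of these forcings over the relevant cardinals, being careful that the product is sufficiently closed or sufficiently ccc that it neither collapses cardinals nor disturbs the value of $\continuum$ fixed at the outset.

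**The hard part will be** controlling the interaction between the forcing and the cardinal arithmetic. On the one hand, to realize an arbitrary prescribed value of $\continuum$ we need the forcing to add reals (or we need to start from a model where $\continuum$ already has the desired value); on the other hand, once reals are added at a later stage of the iteration, a set that was closed and the pieces that formed a partition may fail to remain a partition, since new reals might escape every piece. The resolution is to arrange that the partitions are added \emph{last}, or to interleave the construction so that at the stage where the $\k$-partition is adjoined no further reals will be added to $\C$; this typically requires a bookkeeping argument together with a properness or chain-condition analysis showing that the tail of the iteration adds no reals. Establishing this no-new-reals property for the tail, while still permitting the head of the iteration to set $\continuum$ freely, is the delicate technical heart of the theorem.

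**Finally I would** verify that the closed sets so constructed genuinely partition $\C$ in the extension: covering follows from a density argument (every real is decided into some piece by a dense set of conditions), disjointness is built into the conditions, and closedness is preserved because each piece is read off as an intersection of the clopen approximations appearing in the generic. A counting argument then confirms that exactly $\k$ pieces are nonempty for each target $\k$, completing the proof that it is consistent, with any prescribed value of $\continuum$, that $\C$ partitions into $\k$ closed sets for every $\k \le \continuum$.
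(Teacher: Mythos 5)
Your proposed resolution of the ``hard part'' runs in exactly the wrong direction, and the gap it leaves is the entire content of the theorem. If the partitions are adjoined by a tail of the iteration that adds no new reals, then every closed piece is coded by a real of the intermediate model, so your forcing must generically select, from the closed sets already present, a pairwise disjoint family of size $\k < \continuum$ covering every real. The natural poset for this (countable partial functions from $\k$ to closed sets with pairwise disjoint values, ordered by extension) is $\s$-closed, which protects $\w_1$ and the reals, but it has antichains of size $\continuum$ (conditions with the same countable domain and different values), so no chain-condition argument protects the cardinals in $[\w_2,\continuum]$; nothing in your proposal explains why this forcing, let alone a product of such forcings over all $\k$, does not collapse $\continuum$ --- and a collapse would either trivialize the statement or destroy the prescribed value of $\continuum$. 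This preservation problem is not a routine ``properness or chain-condition analysis'': note, for instance, that a condition whose countably many pieces already cover $\C$ forces the generic partition to be countable (so conditions must be constrained, e.g.\ to nowhere dense pieces), and that under \textit{MA} every partition of $\C$ into closed sets has size at most $\w$ or exactly $\continuum$, so your tail forcing would have to destroy such statements while adding no reals and collapsing nothing. No known forcing of the kind you describe is verified to do this, and your write-up simply assumes one exists.

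The paper's proof avoids this entirely by doing the opposite: it never stops adding reals, and instead arranges for new reals to be absorbed by pieces created later. It uses the second author's ccc partial-information forcing $\p(X)$, whose generic is an $F_\s$ set covering $X$ and missing the current model's reals outside $X$, iterated with finite support for $\a \cdot \w_1$ steps (where $\continuum = \w_\a$), with the bookkeeping $\delta = \a\cdot\b+\gamma$ handling every $\k = \w_\gamma < \continuum$ simultaneously. The $\k$-partition consists of $\k$ ground-model singletons together with the $\w_1$ generic $F_\s$ sets $F^\b_\k$: since the iteration is ccc and its length has cofinality $\w_1$, every real of the final model appears in some $M_{\a\cdot\b+\gamma}$ with $\b < \w_1$ and is then covered by $F^\b_\k$ (or was already excluded because it lies in $X_\k$ or an earlier piece), while disjointness is forced by the definition of $\p$; ccc gives cardinal preservation and keeps $\continuum = \w_\a$, and each $F_\s$ piece is finally split into countably many compact sets to yield closed pieces. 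So the correct mechanism is ``cover new reals later,'' not ``forbid new reals,'' and the step you defer to a preservation argument is precisely the step the paper's construction is designed to circumvent.
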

\begin{proof}
It is proved in \cite{AM1}, Theorem 4, that for any possible value of $\continuum$ and any fixed $\k < \continuum$, there is a model in which $\C$ can be partitioned into $\k$ copies of $\C$. Here we show how to modify that construction to obtain a partition into $\k$ copies of $\C$ for all $\k < \continuum$ simultaneously.

If \textit{CH} holds then the conclusion is trivial. Let $M$ be any model in which \textit{CH} fails. We will show how to construct a finite support iterated forcing such that extending $M$ by this forcing preserves cardinals, does not change the value of $\continuum$, and adds for every $\w_1 \leq \k < \continuum$ a partition of $\C$ into $\k$ closed sets.

For $X \sub 2^\w$, define $\p(X)$ as follows. Conditions are finite mutually consistent sets of sentences of the form ``$\[2^{<\w}_s\] \cap C_n = \0$'' or ``$x \in C_n$'' where $n \in \w$, $x \in X$, and $s \in 2^{<\w}$ (this is as in \cite{AM1}). In an extension of $M$ by $\p(X)$, $\bigcup_{n \in \w}C_n$ will be an $F_\s$ set that covers $X$ and misses every element of $(\C)^M \setminus X$. For any $X$, $\p(X)$ has the c.c.c.

Our iterated forcing will have length $\a \cdot \w_1$, where $\a$ is the unique ordinal such that $\continuum = \w_\a$ (alternatively, this can be thought of as a length-$\w_1$ iteration of length-$\a$ iterations). For each $\k$ with $\w_1 \leq \k < \continuum$, let $X_\k^0$ be a subset of $\C$ with $\card{X_\k^0} = \k$. Let $\delta < \a \cdot \w_1$, and let $\b$ and $\gamma$ be the unique ordinals such that $\gamma < \a$ and $\delta = \a \cdot \b + \gamma$ (see \cite{Kun}, Ch. I, Ex. 3 to see that such ordinals exist and are unique). In our iteration, $M_{\delta+1}$ is obtained by forcing with $\p(\C \setminus (X_{\w_\gamma} \cup \bigcup \{ F^\xi_{\w_\gamma} : \xi < \b \}))$ in $M_\delta$; this creates a generic $F_\s$ set which we call $F^\b_{\w_\gamma}$.

Since each $\p(X)$ has the c.c.c., our iteration has the c.c.c., and since $\card{\a \cdot \w_1} \leq \w_\a = \continuum^M$, $M_{\a \cdot \w_1} \models `` \continuum = \w_\a "$. Fix $\gamma < \a$ and let $\k = \w_\gamma < \continuum$. For every $x \in \C \setminus X_\k$, there is smallest $\b < \w_1$ such that $x \in M_{\a \cdot \b +\gamma}$, in which case $x \in F^\b_\k$. Thus $M_{\a \cdot \w_1} \models `` \C = X_\k \cup \bigcup_{\b < \w_1} F^\b_\k "$. In other words, $\set{\{x\}}{x \in X_{\k}} \cup \set{F^\b_\k}{\b < \w_1}$ is a partition of $\C$ in $M_{\a \cdot \w_1}$, and (because our iteration preserves cardinals) this partition has cardinality $\k$ when $\k$ is uncountable.

To obtain a partition of $\C$ into $\k$ closed sets in $M_{\a \cdot \w_1}$, it is sufficient to note that every $F_\s$ subset of $\C$ can be partitioned into countably many compact sets. This is showed in \cite{AM1} (the last part of the proof of Theorem 4) or, alternatively, in \cite{Brn} (Proposition 3.5).
\end{proof}

\begin{corollary}
It is consistent with any possible value of $\continuum$ that whenever $\w \leq \k \leq \continuum$ there is a condensation $\k^\w \to \N$.
\end{corollary}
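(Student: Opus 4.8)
The plan is to run everything inside the single model produced by Theorem~\ref{thm:lotsofpartitions} and to reduce the existence of condensations $\k^\w \to \N$ to the existence of partitions of $\N$ into $\k$ Borel sets. The key external input is Theorem 6.2 of \cite{Brn}, already invoked in the proof of Theorem~\ref{thm:unexpectedconverse}: if $\N$ can be partitioned into $\k$ Borel sets, then $\N$ is a condensation of $\k^\w$. So, after fixing an arbitrary permissible value of $\continuum$ and passing to the model of Theorem~\ref{thm:lotsofpartitions}, it suffices to produce, for every $\k$ with $\w \le \k \le \continuum$ simultaneously, a partition of $\N$ into exactly $\k$ Borel sets. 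By choice of the model, for each such $\k$ we already have in hand a partition of $\C$ into $\k$ closed sets, so the real task is to transfer these partitions from $\C$ to $\N$.

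To carry out the transfer I would use, as in Theorem~\ref{thm:unexpectedconverse}, the identification of $\N$ with a co-countable subset of $\C$. Concretely, fix a countable dense set $D \sub \C$ (for instance the eventually-zero sequences). Then $\C \sm D$ is a zero-dimensional Polish space with no isolated points and no compact neighborhoods, so the Alexandrov--Urysohn characterization gives $\C \sm D \homeo \N$. Now let $\set{F_\a}{\a < \k}$ be a partition of $\C$ into $\k$ closed sets. The sets $F_\a \sm D$ are relatively closed in $\C \sm D$, hence Borel subsets of the copy of $\N$, and the nonempty ones form a partition of $\C \sm D \homeo \N$ into Borel sets.

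The only point requiring care is the bookkeeping of cardinalities. Since the $F_\a$ are pairwise disjoint and $D$ is countable, at most countably many indices $\a$ can satisfy $F_\a \sub D$, i.e. can have $F_\a \sm D = \0$. Hence for uncountable $\k$ the number of surviving nonempty blocks $F_\a \sm D$ is still exactly $\k$, yielding a partition of $\N$ into $\k$ Borel sets and therefore, by Theorem 6.2 of \cite{Brn}, a condensation $\k^\w \to \N$. The remaining case $\k = \w$ is handled trivially in \emph{ZFC}: here $\k^\w = \N$ and the identity is already a condensation (equivalently, $\N$ is partitioned into the countably many clopen sets $\set{x \in \N}{x(0) = n}$, $n \in \w$). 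Since the value of $\continuum$ in the model was arbitrary, this establishes the corollary.

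I do not expect any serious obstacle: all the genuine difficulty is concentrated in the forcing construction of Theorem~\ref{thm:lotsofpartitions}, and the corollary is essentially a translation of its conclusion through the chain ``partition of $\C$ into $\k$ closed sets $\Rightarrow$ partition of $\N$ into $\k$ Borel sets $\Rightarrow$ condensation $\k^\w \to \N$.'' The one thing to keep an eye on is that restricting closed subsets of $\C$ to the co-countable copy of $\N$ may annihilate at most countably many blocks, which is harmless for uncountable $\k$ and irrelevant for $\k = \w$.
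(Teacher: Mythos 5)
Your proposal is correct and follows essentially the same route as the paper: identify $\N$ with a co-countable subset of $\C$, restrict the closed-set partitions from the model of Theorem~\ref{thm:lotsofpartitions} to get Borel partitions of $\N$, and then invoke Theorem 6.2 of \cite{Brn} (the direction of Theorem~\ref{thm:unexpectedconverse} the paper cites) to obtain the condensations. The paper states this in two sentences, leaving implicit the cardinality bookkeeping and the $\k = \w$ case that you spell out.
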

\begin{proof}
Because $\N$ can be identified with a co-countable subset of $\C$, the model in Theorem~\ref{thm:lotsofpartitions} has, for every $\k < \continuum$, a partition of $\N$ into $\k$ Borel sets. The corollary now follows from Theorem~\ref{thm:unexpectedconverse}.
\end{proof}

One might notice that the $\k$-sized partition given by our forcing consists of $\k$ singletons and at most $\aleph_1$ nontrivial closed sets. However, it is easy to modify these partitions to obtain $\k$ copies of $\C$:

\begin{proposition}\label{prop:notjustpoints}
If $\C$ can be partitioned into $\k$ closed sets, then $\C$ can be partitioned into $\k$ copies of $\C$.
\end{proposition}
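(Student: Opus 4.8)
The plan is to exploit the homeomorphism $\C \homeo \C \times \C$ (equivalently $2^\w \homeo 2^{\w+\w}$) together with the observation that crossing \emph{any} nonempty closed set with a perfect space destroys all isolated points. Concretely, suppose $\set{C_\a}{\a < \k}$ is a partition of $\C$ into $\k$ closed sets. First I would pass to the homeomorphic copy $\C \times \C$ and consider the family $\set{C_\a \times \C}{\a < \k}$. Since the $C_\a$ partition $\C$, these sets partition $\C \times \C$: they are pairwise disjoint because $(C_\a \times \C) \cap (C_\b \times \C) = (C_\a \cap C_\b) \times \C = \0$ for $\a \neq \b$, and their union is $\left(\bigcup_\a C_\a\right) \times \C = \C \times \C$.

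Second, I would check that each block $C_\a \times \C$ is a copy of $\C$. Each $C_\a$ is nonempty (being a block of a partition) and closed, so $C_\a \times \C$ is a nonempty closed --- hence compact --- subset of $\C \times \C$, and it is zero-dimensional and metrizable as a subspace of $\C \times \C \homeo \C$. The crucial point is that $C_\a \times \C$ has no isolated points: any basic open box meeting it contains a full copy of the perfect second factor $\C$, so no point of it can be isolated. By Brouwer's characterization of the Cantor set (a nonempty, compact, perfect, zero-dimensional, metrizable space is homeomorphic to $\C$), we conclude $C_\a \times \C \homeo \C$. Transporting the partition $\set{C_\a \times \C}{\a < \k}$ back through a fixed homeomorphism $\C \homeo \C \times \C$ then yields a partition of $\C$ into exactly $\k$ copies of $\C$, as desired.

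The main --- and essentially only --- thing to verify is the perfectness of each block, and this is precisely what the extra $\C$ factor is there to guarantee: whatever pathology $C_\a$ might exhibit, whether it is a single point, finite, countable, or uncountable but with isolated points, it is washed out once $C_\a$ is multiplied by the perfect space $\C$. I do not expect a genuine obstacle here; the only formal detail worth recording is that a partition is understood to consist of nonempty blocks, so that each $C_\a \times \C$ is nonempty and Brouwer's theorem applies. Note also that this argument preserves the index set $\k$ exactly, so the number of pieces is unchanged --- which is important, since freely splitting a single copy of $\C$ into $\k$ copies is exactly the model-dependent problem we are not assuming we can solve.
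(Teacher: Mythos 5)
Your proof is correct and is essentially the paper's own argument: the paper also passes to $\C \times \C \homeo \C$ and uses the partition $\set{K_\a \times \C}{\a \in \k}$, simply leaving implicit the verification (via Brouwer's characterization) that each block is a copy of $\C$, which you spell out. No issues.
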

\begin{proof}
Let $\set{K_\a}{\a \in \k}$ be a partition of $\C$ into closed sets. Then $\set{K_\a \times \C}{\a \in \k}$ is a partition of $\C \times \C \homeo \C$ into copies of $\C$.
\end{proof}

\begin{corollary}
It is consistent with any possible value of $\continuum$ that $\C$ is a condensation of every perfect completely ultrametrizable space $X$ with $\card{X} = \continuum$.
\end{corollary}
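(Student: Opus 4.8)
The plan is to combine the three immediately preceding results. Recall from the introduction that Theorem 5.1 of \cite{Brn} characterizes exactly this situation: $\C$ is a condensation of every perfect completely ultrametrizable space of size $\card{\C}$ if and only if $\C$ can be partitioned into $\k$ copies of $\C$ for every $\k \leq \continuum$. So it suffices to produce, consistently with any prescribed value of $\continuum$, such a family of partitions.

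First I would pass to the model constructed in Theorem~\ref{thm:lotsofpartitions}. That theorem is consistent with any possible value of $\continuum$ and yields, in the resulting model, a partition of $\C$ into $\k$ closed sets for every $\k \leq \continuum$. Next I would apply Proposition~\ref{prop:notjustpoints} uniformly across these $\k$: since each such partition into closed sets can be upgraded (via $\C \homeo \C \times \C$) to a partition into copies of $\C$, the model in fact admits, for every $\k \leq \continuum$, a partition of $\C$ into $\k$ copies of $\C$. This is precisely the right-hand side of the characterization, so the conclusion of Theorem 5.1 of \cite{Brn} applies and gives the corollary.

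I do not expect any substantial obstacle; the work has already been done in the preceding results, and the role of Proposition~\ref{prop:notjustpoints} is exactly to bridge the gap between the ``closed sets'' supplied by Theorem~\ref{thm:lotsofpartitions} and the ``copies of $\C$'' demanded by the characterization. The only point worth checking is that the full range $\k \leq \continuum$ is covered rather than merely $\k < \continuum$; the endpoint $\k = \continuum$ together with the finite and countable cases hold in \textit{ZFC} (for instance, $\C \homeo \C \times \C$ splits into $\continuum$ copies of $\C$, and $\C$ has a \textit{ZFC} partition into $\aleph_0$ closed sets to which Proposition~\ref{prop:notjustpoints} again applies), so nothing is lost at the extremes.
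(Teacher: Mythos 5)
Your proposal is correct and is essentially identical to the paper's proof, which simply combines Theorem~\ref{thm:lotsofpartitions}, Proposition~\ref{prop:notjustpoints}, and Theorem 5.1 of \cite{Brn} in exactly the way you describe. Your added remark about the trivial cases ($\k \leq \aleph_0$ and $\k = \continuum$) is a harmless extra check, since the statement of Theorem~\ref{thm:lotsofpartitions} already covers all $\k \leq \continuum$.
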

\begin{proof}
Combine Theorem~\ref{thm:lotsofpartitions}, Proposition~\ref{prop:notjustpoints}, and Theorem 5.1 of \cite{Brn}.
\end{proof}

A result of Hausdorff states that there is always a partition of $\N$ into $\aleph_1$ Borel sets (see \cite{Hsd}). Theorem 3.7 of \cite{AM2}, together with the comments that follow it, says that it is consistent with any value of $\continuum$ that every uncountable partition of $\N$ into Borel sets has size $\aleph_1$ or $\continuum$ (this is strengthened below in Corollary~\ref{cohenreal}). Theorem~\ref{thm:lotsofpartitions} gives the opposite result, but it remains an open problem to find models in which some intermediate property holds. Is it consistent, for example, to have $\continuum = \w_9$ with uncountable partitions of sizes $\aleph_1$, $\aleph_4$, $\aleph_8$, and $\aleph_9$, but of no other sizes? The following propositions provide partial answers to such questions.

Let $\fin(\ka,2)$ be the partial order of finite partial functions
from $\ka$ to $2$, i.e., Cohen forcing.

\begin{theorem}
(\cite{AM2} 3.7) 
Suppose $M$ is a countable transitive model of ZFC + GCH.
Let $\ka$ be any cardinal of $M$ of uncountable cofinality.
Suppose that $G$ is $\fin(\ka,2)$-generic over $M$, then 
in $M[G]$ the continuum is $\ka$ and for every
family $\ff$ of Borel subsets of $\N$ with size $\om_1<|\ff|<\ka$,
if $\bigcup \ff =\N$ then there exists $\ff_0\in [\ff]^{\om_1}$ with
$\bigcup\ff_0=\om^\om$.
\end{theorem}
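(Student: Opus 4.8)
The plan is to exploit the product structure and homogeneity of $\fin(\ka,2)$ together with the fact that, in $M[G]$, the covering number of the meager ideal is as large as $\ka$, hence strictly bigger than $\card{\ff}$. First I would dispose of the arithmetic: since $\fin(\ka,2)$ is ccc it preserves cardinals and cofinalities, and a nice-name count using GCH in $M$ and $\cf(\ka)>\om$ gives $\ka^{\aleph_0}=\ka$, so $\continuum=\ka$ in $M[G]$. Next, recall that each $B\in\ff$ is given by a Borel code, which is a single real and hence has a countable support $s_B\su\ka$; letting $S=\bigcup_{B\in\ff}s_B$ we get $\card{S}\le\card{\ff}<\ka$. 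The key genericity fact I would isolate is that, for any $A\su\ka$, the generic $G\rest(\ka\sm A)$ is Cohen over $M[G\rest A]$, so a real built from finitely/countably many coordinates outside $A$ can be made Cohen-generic over $M[G\rest A]$ and therefore avoids every meager Borel set coded in $M[G\rest A]$.

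From this I would extract the \emph{category core}: no nonempty basic clopen subset of $\N$ is the union of fewer than $\ka$ meager Borel sets, because the $<\ka$ codes have total support of size $<\ka$, leaving infinitely many free coordinates on which a Cohen real lands in the clopen set and misses all the meager pieces. Consequently, writing $U(B)$ for the largest open set on which $B$ is comeager (this exists since Borel sets have the Baire property), the family $\{U(B):B\in\ff\}$ must cover $\N$: a point lying in no $U(B)$ would have a clopen neighborhood in which every $B\in\ff$ is meager, contradicting the core. Since $\N$ is second countable, hence hereditarily Lindel\"of, countably many of the $U(B)$ already cover $\N$; the corresponding countably many sets $B$ then cover all of $\N$ except a meager remainder $R$, which is contained in an $F_\sigma$ meager set $\bigcup_n F_n$ coded in $M[G\rest S_0]$ for some countable $S_0$.

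I would then run a recursion of length $\omega_1$ that peels off comeager parts and pushes the uncovered points into ever thinner nowhere dense sets. On each perfect closed piece $F$ encountered, the relative category core still applies: adding Cohen reals of $\N$ also adds, over any small submodel coding $F$, Cohen-generic reals of $F$, since the poset of nonempty relatively clopen subsets of a perfect $F$ is again a countable atomless forcing and so is isomorphic to Cohen forcing. Thus the relative covering number of the meager ideal on $F$ is again $\ka>\card{\ff}$, the relative open parts of the $B\cap F$ cover $F$, relative Lindel\"ofness gives a countable subcover, and one more meager-in-$F$ remainder is produced. At stage $\alpha<\omega_1$ the accumulated codes have support of size at most $\omega_1<\ka$ (here I use $\card{\ff}>\omega_1$ only to know $\ka>\omega_1$, and $\cf(\ka)>\om$ for the continuum computation), so $\ka$ free coordinates always remain and the relative core never degrades. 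Using countably many sets from $\ff$ at each of the $\omega_1$ stages, the accumulated subfamily $\ff_0$ has size at most $\omega_1$, as required.

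The \textbf{main obstacle} is proving that this recursion actually exhausts $\N$ by stage $\omega_1$, i.e.\ that no point survives uncovered. A surviving point would be a real of $M[G]$ that is, at every level, \emph{non}-generic for the relevant perfect-set Cohen forcing over the submodel accumulated so far; the heart of the proof is to rule this out by showing that every real of $M[G]$ is eventually captured, namely that it becomes relatively Cohen-generic over the stage submodel for the perfect set currently containing it. This is exactly where uncountable cofinality is essential: it keeps every stage submodel of size $<\ka$, so that the $\ka$ remaining Cohen reals can be used to realize the needed genericity within $\omega_1$ steps — in sharp contrast to the $\cf(\ka)=\om$ case, where the preceding theorem shows covering fails outright. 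I would handle this capture by a fusion-style bookkeeping, assigning at each node a block of fresh coordinates to the $n$-th perfect piece and arguing that any real is generic for some such block, so that the meager remainder has emptied by stage $\omega_1$.
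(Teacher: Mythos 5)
The first thing to note is that the paper does not prove this statement at all: it is quoted as Theorem 3.7 of \cite{AM2}, with only the remark that Miller's proof (given there for $\ka=\om_3$) works in greater generality. So your proposal must stand as a from-scratch proof, and it does not. Before the main gap, one intermediate claim is simply false: that $\explicitSet{U(B) : B\in\ff}$ covers $\N$. Your justification (``a point lying in no $U(B)$ would have a clopen neighborhood in which every $B\in\ff$ is meager'') is a non sequitur: $x\notin U(B)$ says that $B$ is comeager in no neighborhood of $x$, which is much weaker than $B$ being meager in some neighborhood of $x$ (take $B$ a union of clopen sets accumulating at $x$). A concrete counterexample inside the very model at hand: let $C$ be a perfect nowhere dense closed set, let $\ff$ consist of $C$, the countably many clopen pieces of $\N\setminus C$, and $\om_2$ singletons; then $\bigcup\ff=\N$ and $\om_1<\card{\ff}<\ka$, but $\bigcup_{B\in\ff}U(B)=\N\setminus C$. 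What your category core really gives is that $\bigcup_{B}U(B)$ is open \emph{dense}; this weaker fact can still feed your recursion, so this error is repairable.

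The fatal problem is exactly the step you label the ``main obstacle'' and then defer to ``fusion-style bookkeeping'': you never prove that the recursion exhausts $\N$ by stage $\om_1$, and the capture mechanism you sketch cannot work even in principle. Your plan is that every surviving real ``becomes relatively Cohen-generic over the stage submodel,'' but the hard reals are precisely the non-generic ones: every real of the ground model $M$, and more generally every real of $M[G\rest S]$ where $S$ is the union of the supports of the codes of the members of $\ff$, is Cohen-generic over no relevant submodel, yet $\ff$ must cover all $\card{\ff}>\om_1$ of them and your category machinery says nothing about which $\om_1$ members of $\ff$ do so. Category arguments (in effect, that the covering number of the meager ideal is $\ka$ in $M[G]$) only ever see the generic reals; no iteration of that idea reaches the others, and without it nothing rules out a point surviving along a strictly decreasing $\om_1$-chain of perfect closed sets with nonempty intersections, which certainly exist. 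Covering the non-generic reals with only $\om_1$ members of $\ff$ is the actual content of the theorem, and Miller's proof attacks it with entirely different tools: one works in $M$ with names, uses the countable supports of the Borel codes and of putative counterexample reals, and applies a $\Delta$-system plus isomorphism-of-names/homogeneity argument for $\fin(\ka,2)$ --- with GCH in $M$ doing real work in counting name types, not just in computing $\continuum$ --- to force a contradiction from the assumption that no subfamily of size $\om_1$ covers. The fact that your outline invokes GCH only for the continuum computation is itself a symptom that the heart of the argument is missing.
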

This is only stated in \cite{AM2} for $\ka=\om_3$ but it is clear
from the proof that it is true in more generality.

\begin{corollary}\label{cohenreal}
Suppose $M$ is a countable transitive model of ZFC + GCH.
Let $\ka$ be any cardinal of $M$ of uncountable cofinality which is
not the successor of a cardinal of countable cofinality. 
Suppose that $G$ is $\fin(\ka,2)$-generic over $M$, then 
in $M[G]$ the continuum is $\ka$ and for every uncountable $\ga<\ka$
if $F:\ga^\om\to \om^\om$ is continuous and onto, then there
exists a $Q\in [\ga]^{\om_1}$ such that $F(Q^\om)=\om^\om$.
\end{corollary}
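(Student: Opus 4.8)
The plan is to derive Corollary~\ref{cohenreal} from the preceding Theorem (\cite{AM2} 3.7) by feeding it a partition of $\N$ manufactured from the continuous surjection $F$. Suppose $F : \ga^\om \to \om^\om$ is continuous and onto, with $\om_1 \le \ga < \ka$. The first step is to produce a family $\ff$ of Borel subsets of $\N$ covering $\N$ and of size strictly between $\om_1$ and $\ka$, to which the hypothesis of the Theorem applies. To this end I would invoke Corollary~\ref{cor:inductivedivisions} inside $M[G]$: writing $\ga = \w_\mu$, since $\ka$ is not the successor of a cardinal of countable cofinality, $\mu < \w$ or more generally $\ga$ lies below the first singular cardinal, so $\ga^\om$ partitions into $|\ga| = \w_\mu$ copies of $\N$, say $\set{N_\al}{\al < \ga}$. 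By the Lusin--Souslin theorem (\cite{Kec}, Theorem 15.1), each $F(N_\al)$ is a Borel subset of $\om^\om$, and these sets cover $\om^\om$ because $F$ is onto. Thus $\ff = \set{F(N_\al)}{\al < \ga}$ is a Borel cover of $\N$ of size $|\ga| = \w_\mu$, with $\om_1 \le |\ff| < \ka$.

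The second step handles the boundary case. If $|\ff| = \om_1$, i.e.\ $\ga = \w_1$, then the conclusion is essentially immediate: take $Q = \ga$ itself (so $Q \in [\ga]^{\om_1}$ and $Q^\om = \ga^\om$, whence $F(Q^\om) = \om^\om$). If instead $\om_1 < |\ff| < \ka$, then the Theorem applies and yields a subfamily $\ff_0 \in [\ff]^{\om_1}$ with $\bigcup \ff_0 = \om^\om$. Writing $\ff_0 = \set{F(N_\al)}{\al \in A}$ for some $A \in [\ga]^{\om_1}$, we have $\om^\om = \bigcup_{\al \in A} F(N_\al) = F\!\left(\bigcup_{\al \in A} N_\al\right)$.

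The third step is to convert the index set $A$ into a set $Q \in [\ga]^{\om_1}$ of ordinals with $F(Q^\om) = \om^\om$. The subtle point is that $\bigcup_{\al \in A} N_\al$ is a subset of $\ga^\om$, not of the form $Q^\om$, so some care is needed to extract $Q$. I expect this to be the main obstacle. The idea is to let $Q$ be the set of all ordinals appearing as a coordinate of some branch in $\bigcup_{\al \in A} N_\al$; equivalently, using the explicit description of the $N_\al$ from the induction in Corollary~\ref{cor:inductivedivisions} as end spaces of subtrees of $\ga^{<\w}$, let $Q$ be the union of the (countable) coordinate-supports of these $\om_1$ copies. Each copy of $\N$ sits inside $\Gamma^\om$ for a countable $\Gamma \su \ga$ (as in the projection argument of the earlier theorem on singular $\ka$), so $Q$ is a union of $\om_1$ countable sets and hence $|Q| \le \om_1$; since $|A| = \om_1$ and distinct copies occupy disjoint regions, $|Q| = \om_1$. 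Then $\bigcup_{\al \in A} N_\al \su Q^\om$, so $\om^\om = F\!\left(\bigcup_{\al \in A} N_\al\right) \su F(Q^\om) \su \om^\om$, giving $F(Q^\om) = \om^\om$ as required. The one thing to verify carefully is that passing from $A$ to $Q^\om$ does not enlarge the domain beyond $\ga^\om$ and keeps $|Q| = \om_1$, which the disjointness and countable-support observations secure.
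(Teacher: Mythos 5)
Your first step contains a fatal gap. You claim that the hypothesis that $\ka$ is not the successor of a cardinal of countable cofinality forces $\ga$ to lie below the first singular cardinal, so that Corollary~\ref{cor:inductivedivisions} applies. This is false: that hypothesis constrains $\ka$, not $\ga$. For instance $\ka = \aleph_{\om+2}$ is allowed (it is regular and is the successor of $\aleph_{\om+1}$, which is itself regular), and then the statement demands the conclusion for $\ga = \aleph_\om$ and $\ga = \aleph_{\om+1}$, both of which are $\geq \aleph_\om$. Corollary~\ref{cor:inductivedivisions} says nothing about such $\ga$, and the problem is worse than a missing lemma: the paper's own theorem immediately following that corollary proves that when $\ga$ has countable cofinality (e.g.\ $\ga = \aleph_\om$), the space $\ga^\om$ is \emph{not} a union of $\ga$ many copies of $\N$, so the partition your argument requires provably does not exist. (The actual role of the hypothesis on $\ka$ is entirely different: it guarantees $\card{[\ga]^\om \cap M} < \ka$, since under GCH in $M$ this cardinality is $\ga^+$ when $\cf(\ga)=\om$.) There is also a second, smaller gap: the Lusin--Souslin theorem (\cite{Kec}, Theorem 15.1) requires injectivity, but $F$ is only assumed continuous and onto, so each $F(N_\a)$ is analytic, not necessarily Borel.

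The paper's proof avoids both problems by covering rather than partitioning, and by using ground-model sets. Let $\Sigma = [\ga]^\om \cap M$. Since Cohen forcing is c.c.c., every $\om$-sequence of ordinals below $\ga$ in $M[G]$ is contained in $X^\om$ for some countable $X \in M$, so $\ga^\om = \bigcup\set{X^\om}{X \in \Sigma}$ holds in $M[G]$; and by GCH in $M$ together with the hypothesis on $\ka$, $\card{\Sigma} < \ka$. Each $F(X^\om)$ is a $\mathbf{\Sigma}^1_1$ set, hence a union of $\om_1$ Borel sets, which produces a Borel cover $\ff$ of $\om^\om$ of size $< \ka$; the cited Theorem then gives $\ff_0 \in [\ff]^{\om_1}$ with $\bigcup \ff_0 = \om^\om$, and $Q$ is the union of the $\om_1$ many countable sets $X$ contributing to $\ff_0$. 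Your final step (assembling $Q$ from countable supports and noting $\bigcup_{\a \in A} N_\a \su Q^\om$) is sound and parallels the paper's; it is the production of the small Borel cover that fails in your approach, and the fact that analytic sets are unions of $\aleph_1$ Borel sets is exactly what replaces both your appeal to Corollary~\ref{cor:inductivedivisions} and your appeal to Lusin--Souslin.
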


\begin{proof}
Let $\Sigma=[\ga]^\om\cap M$.  Note that $|\Sigma|<\ka$ 
since in $M$ $|\ga^\om|>\ga$ if and only if $\ga$ has cofinality $\om$,
but in that case $|\ga^\om|=\ga^+<\ka$.
Since the forcing is c.c.c. 
$$M[G]\models \;\;\ga^\om =\bigcup\{X^\om\;:\; X\in\Sigma\}$$
For any $X\in\Sigma$ the continuous image 
$F(X^\om)$ is a ${\bf\Sigma}^1_1$ set, and hence the union
of $\om_1$ Borel sets.  Given a family $\ff$ of $|\Sigma|$-many Borel sets
whose union is $\om^\om$ there is a subfamily $\ff_0$ of size $\om_1$
whose union is $\om^\om$ and hence a $Q$ as required.
\end{proof}

If $\ka=\lambda^+$ where $\lambda$ has cofinality $\om$, then
the result holds for $\ga<\lambda$ but since $\lambda^\om=\ka$ holds
in $M$ we do not know whether it is true for $\lambda$.

\begin{proposition}\label{prop:smallbutnotlarge}
It is consistent that the continuum be arbitrarily large,
$\N$ can be partitioned into $\w_2$ Borel sets, and 
$\N$ is not the condensation of $\ka^\om$ whenever
$\w_2 < \k < \continuum$ 
\end{proposition}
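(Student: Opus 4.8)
The plan is to combine the Cohen model underlying Corollary~\ref{cohenreal} with a single instance of the partition forcing from Theorem~\ref{thm:lotsofpartitions}, arranged so that the reflection phenomenon of \cite{AM2} is pushed up from $\aleph_1$ to $\aleph_2$. Fix $M \models$ ZFC $+$ GCH and a cardinal $\ka$ of $M$, as large as desired, of uncountable cofinality and not the successor of a cardinal of countable cofinality, so that the hypotheses of Corollary~\ref{cohenreal} apply. First I would force with $\fin(\ka,2)$ to obtain $M[G]$ with $\continuum = \ka$. Then, working in $M[G]$, I would fix $X \in [\C]^{\aleph_2}$ (concretely, the first $\aleph_2$ Cohen reals) and force with $H$, the length-$\w_1$ finite-support iteration of the forcings $\p(\cdot)$ from the proof of Theorem~\ref{thm:lotsofpartitions} taken only for this one set $X$: at stage $\b$ one forces with $\p(\C \setminus (X \cup \bigcup_{\xi < \b} F^\xi))$, creating a generic $F_\s$ set $F^\b$. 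Write $N = M[G][H]$.

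The first two requirements are then routine. Since $\fin(\ka,2) * \dot H$ is c.c.c., cardinals and cofinalities are preserved and $N \models \continuum = \ka$, which can be any prescribed value meeting the hypotheses. Since $H$ has length $\w_1$ and is performed last, every real of $N$ appears by some stage $\b < \w_1$ and is therefore covered by some $F^\xi$ with $\xi \le \b$; exactly as in Theorem~\ref{thm:lotsofpartitions}, $\set{\{x\}}{x \in X} \cup \set{F^\b}{\b < \w_1}$ is a genuine partition of $\C$ into $\aleph_2$ Borel sets, and intersecting with a co-countable copy of $\N$ gives a partition of $\N$ into $\aleph_2$ Borel sets.

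For the third requirement, suppose toward a contradiction that $F : \lambda^\w \to \N$ is a condensation in $N$ for some cardinal $\lambda$ with $\aleph_2 < \lambda < \ka$. As in the proof of Corollary~\ref{cohenreal}, c.c.c.-ness gives $\lambda^\w = \bigcup \set{Y^\w}{Y \in \Sigma}$ for $\Sigma = [\lambda]^\w \cap M$, so $\N = \bigcup \set{F(Y^\w)}{Y \in \Sigma}$; each $F(Y^\w)$ is ${\bf\Sigma}^1_1$, hence a union of $\aleph_1$ Borel sets, and $|\Sigma| < \ka$ by the hypothesis on $\ka$. This exhibits $\N$ as a union of fewer than $\ka$ Borel sets. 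Granting the reduction described below, we may pass to a subcover of size $\le \aleph_2$, and the corresponding set $\Sigma_0 \su \Sigma$ of indices has $|\Sigma_0| \le \aleph_2$ and $\bigcup \set{F(Y^\w)}{Y \in \Sigma_0} = \N$. Then $Q = \bigcup \Sigma_0$ satisfies $|Q| \le \aleph_2 < \lambda$ and $F(Q^\w) = \N$, whence $Q^\w = \lambda^\w$ by injectivity of $F$ — impossible, since $Q \subsetneq \lambda$.

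Thus everything reduces to proving, in $N$, that every cover of $\N$ by fewer than $\ka$ Borel sets has a subcover of size $\le \aleph_2$; this is the main obstacle and the sole place where real work is needed. I would obtain it by adapting the reflection argument behind Theorem~3.7 of \cite{AM2} from $\aleph_1$ to $\aleph_2$: given a name for such a cover, reflect it into an elementary submodel $\mathcal{E} \prec H_\theta^M$ with $\aleph_2 \su \mathcal{E}$, $|\mathcal{E}| = \aleph_2$, $\mathcal{E}^\w \cap M \su \mathcal{E}$, and $\dot H, X \in \mathcal{E}$, and argue that the $\le \aleph_2$ cover-members indexed inside $\mathcal{E}$ already cover $\N$, using that the Cohen coordinates outside $\mathcal{E}$ remain generic over $\mathcal{E}$. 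The threshold cannot be lowered to $\aleph_1$ — and this is exactly the point — because the isolating iteration $H$ keeps each of the $\aleph_2$ points of $X$ out of every $F^\b$, so any reflecting submodel must contain all of $X$ and hence have size at least $\aleph_2$ (indeed the $\aleph_2$-partition itself is a cover admitting no subcover below $\aleph_2$). The delicate step is checking that this Cohen-reflection survives the presence of the size-$\continuum$ iteration $H$: one must verify that a real outside $X$ is still driven into a reflected cover-member by genericity, while the deliberate isolation of the $\aleph_2$ points of $X$ is what blocks any reduction below $\aleph_2$. Verifying this interaction is where I expect the genuine difficulty to lie; the rest is bookkeeping with c.c.c. iterations and the elementary properties of ${\bf\Sigma}^1_1$ sets.
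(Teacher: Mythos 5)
Your outer skeleton is sound and matches the paper's strategy in spirit: build an $\aleph_2$-sized partition of $\C$ into closed/Borel sets, and derive the non-condensation clause exactly as in Corollary~\ref{cohenreal} (write $\lambda^\w = \bigcup\{Y^\w : Y \in \Sigma\}$ with $\Sigma = [\lambda]^\w \cap M$ by c.c.c.-ness, note each $F(Y^\w)$ is ${\bf\Sigma}^1_1$ and hence a union of $\aleph_1$ Borel sets, then invoke a reflection property to get a small $Q$ with $F(Q^\w) = \N$, contradicting injectivity). But the crux of the whole proposition --- that in your model $N = M[G][H]$ every Borel cover of $\N$ of size less than $\ka$ admits a subcover of size at most $\aleph_2$ --- is precisely what you do not prove. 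You present it as a plan (``I would obtain it by adapting the reflection argument\dots'') and explicitly flag it as ``where I expect the genuine difficulty to lie.'' Since everything else in your write-up is, as you say, bookkeeping, the proposal is incomplete at its only essential point.

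This gap is not a routine verification, and the paper's own proof is arranged specifically to avoid it. The argument of \cite{AM2}, Theorem 3.7, exploits the homogeneity and product structure of $\fin(\ka,2)$: a Borel code lives in $M[G \rest A]$ for some countable $A \su \ka$, and the remaining forcing is again Cohen and homogeneous over that sub-extension. In your model you append a second forcing $H$ of size $\ka = \continuum$, an iteration whose iterands $\p(\cdot)$ are defined from the particular sets $X$ and $F^\xi$ and whose later stages depend on earlier generics; a Borel code in $M[G][H]$ need not be captured in any sub-extension over which the quotient forcing is Cohen-like or homogeneous. Your elementary-submodel sketch does not engage with this: a submodel $\mathcal{E}$ of size $\aleph_2$ cannot contain $H$, so one must analyze the quotient of $\fin(\ka,2) * \dot H$ below $\mathcal{E}$, and nothing is said about why that quotient still ``drives'' reals into reflected cover members. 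Worse, the statement you need is delicately calibrated: $H$ is designed to destroy reflection at $\aleph_1$ (your own partition witnesses that failure), so you must show it degrades Cohen reflection by exactly one cardinal and no more. The paper sidesteps this interaction entirely by reversing the order of the two ingredients: the partition design is done in the \emph{ground} model --- a model of MA $+$ $\continuum = \w_2$, in which one constructs $\aleph_2$ closed nowhere dense sets $C_\a$ such that every non-atomic Borel probability measure concentrates on countably many of them --- and only then is a \emph{single homogeneous} forcing (the measure algebra on $2^\lambda$) performed. Every new real is random over $M$ for some ground-model measure and so lands in some $C_\a$, which yields the $\aleph_2$-sized closed partition with no post-forcing at all, and the reflection threshold $\aleph_2$ arises naturally as the size of the ground model continuum, so the argument of \cite{AM2} transfers to that one homogeneous step. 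To rescue your route you would need to prove the reflection property for the composite Cohen-then-$\p(\cdot)$ iteration from scratch; it does not follow from \cite{AM2} as cited.
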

\begin{proof}
Let $M$ be a model of \textit{ZFC} satisfying Martin's Axiom and $\continuum = \w_2$. Using transfinite induction in $M$, it is possible to construct a sequence $\seq{C_\a}{\a < \w_2}$ of closed nowhere dense subsets of $\C$ such that for every non-atomic Borel probability measure $\mu$ on $\C$ there are countably many of the $C_\a$ whose union has $\mu$-measure $1$.

In $M$, force with the measure algebra on $2^\lambda$ for any $\lambda$ with uncountable cofinality not the successor of a cardinal of countable
cofinality. In the generic extension we have $\continuum = \lambda$, and every new real is random with respect to some non-atomic Borel probability measure in the ground model. Because of our choice of the $C_\a$, this implies that every new real will be in some $C_\a$. Therefore $\set{C_\a}{\a < \w_2} \cup \set{\{x\}}{x \in \C \setminus \bigcup_{\a < \w_2}C_\a}$ has size $\aleph_2$ in the generic extension. Because the $C_\a$ are disjoint, it is a partition of $\C$ into closed sets.

The proof of Theorem 3.7 in \cite{AM2} uses Cohen reals, but the same idea shows that this generic extension has the property that for every
family $\ff$ of Borel subsets of $\N$ with size $\aleph_2<|\ff|<\lambda$,
if $\bigcup \ff =\N$ then there exists $\ff_0\in [\ff]^{\om_2}$ with
$\bigcup\ff_0=\om^\om$.  As in the proof of Corollary \ref{cohenreal}
we get that $\N$ is not the condensation of any $\ka^\om$ whenever
$\aleph_2<\ka<\continuum$.
\end{proof}

Note the similarity of this argument to the argument of Stern in \cite{Stn} (later rediscovered by Kunen), where he proves that $\C$ can be partitioned into $\aleph_1$ closed sets in any random real extension of a model of \textit{CH}.

Note also that trivial modifications to the proof of Proposition~\ref{prop:smallbutnotlarge} allow us to replace $\w_2$ with any cardinal $\mu$ of uncountable cofinality. However, doing so will not guarantee partitions of all sizes smaller than $\mu$. This is because it is not currently known whether Martin's Axiom implies (or even permits) Borel set partitions of $\N$ of all sizes less than $\continuum$. Thus this proof does not give ``small partitions without large,'' but only ``partitions of a given size without larger ones.''

\begin{question}
If $\w_1 < \k < \continuum$, does Martin's Axiom imply the existence of a size-$\k$ partition of $\N$ into Borel sets? Is this consistent with Martin's Axiom?
\end{question}

\section{Similarity types}\label{sec:types}

This section deals mostly with perfect completely ultrametrizable spaces of size $\continuum$. For the sake of brevity, we will henceforth refer to such spaces as \textbf{PCU spaces}.

Say that two spaces are \textbf{similar} if each condenses onto the other. In \cite{Brn}, it is proved that there are exactly three similarity types of separable PCU spaces, that is, of perfect zero-dimensional Polish spaces. Furthermore, it is proved that these three types are totally ordered by the relation ``$X$ condenses onto $Y$'' (this is called the \textbf{condensation relation}). It is also proved in \cite{Brn} that if \textit{CH} is assumed then there are exactly four similarity types of PCU spaces, and these four types are totally ordered by the condensation relation. That is, the inclusion of non-separable spaces (equivalently, of uncountable trees) introduces only one new similarity type, namely those spaces representable with trees of size $\continuum$.

In general, neither
\begin{enumerate}
\item the total orderability of similarity types by condensation
\item the similarity of spaces represented by uncountable trees of the same size
\end{enumerate}
necessarily holds when \textit{CH} fails. If \textit{MA} holds, for example, then $\w_1^\w$ and $\w_1 \times \C$ are not similar, and neither of $\w_1 \times \C$ and $\N$ condenses onto the other (see \cite{Brn}, Proposition 5.10 and Corollary 6.3). It is left an open question in \cite{Brn} whether it is consistent for $(1)$ and $(2)$ to hold when \textit{CH} fails.

We will show in this section that it is consistent for $(1)$ and $(2)$ to hold with $\continuum$ equal to any $\w_n < \w_\w$. In fact the appropriate model has already been constructed in Theorem~\ref{thm:lotsofpartitions}, and here we merely need to show that $(1)$ and $(2)$ hold in this model.

\begin{lemma}\label{lem:CtoN}
The following are equivalent:
\begin{enumerate}
\item $\C$ can be partitioned into $\k$ closed sets.
\item $\C$ can be partitioned into $\k$ copies of $\C$
\item $\N$ can be partitioned into $\k$ copies of $\C$.
\end{enumerate}
\end{lemma}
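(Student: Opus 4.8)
The plan is to prove $(2)\Rightarrow(1)$ and $(1)\Rightarrow(2)$ (so that $(1)\Leftrightarrow(2)$), then $(3)\Rightarrow(1)$, and finally the single substantial implication $(1)\Rightarrow(3)$. Throughout I assume $\k$ is uncountable: the equivalence genuinely fails at $\k=\aleph_0$, since $\N$ is not $\s$-compact and so cannot be partitioned into countably many copies of $\C$, whereas $\C$ is easily partitioned into $\aleph_0$ closed sets (e.g.\ $\set{\[2^{<\w}_{0^n1}\]}{n<\w}\cup\{\{\bar 0\}\}$). In the intended applications $\k$ is uncountable, and then $\k+\aleph_0=\k$, which is what makes the countable discrepancy between $\C$ and $\N$ harmless in the easy directions.

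The implication $(2)\Rightarrow(1)$ is immediate, since a copy of $\C$ inside $\C$ is compact, hence closed; and $(1)\Rightarrow(2)$ is exactly Proposition~\ref{prop:notjustpoints}. For $(3)\Rightarrow(1)$ I would use the standard fact that $\N$ is homeomorphic to a co-countable subset of $\C$: fix a countable $D\su\C$ with $\C\sm D\homeo\N$. A partition of $\N$ into $\k$ copies of $\C$ transports to a partition of $\C\sm D$ into $\k$ copies of $\C$; each such copy is compact, hence closed in $\C$. Adjoining the $\aleph_0$ singletons $\{d\}$ for $d\in D$ then yields a partition of $\C$ into closed sets of cardinality $\k+\aleph_0=\k$, which is $(1)$. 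Combined with $(1)\Leftrightarrow(2)$, this also gives $(3)\Rightarrow(2)$.

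The heart of the matter is $(1)\Rightarrow(3)$, and I would first make a clean reduction. Since $\N\homeo\C\times\N$ (by the Alexandrov--Urysohn characterization, $\C\times\N$ being a nonempty, nowhere locally compact, zero-dimensional Polish space) and since $\C\times K\homeo\C$ for every nonempty compact $K\su\N$ (the space $\C\times K$ is nonempty, compact, metrizable, perfect, and zero-dimensional, hence a copy of $\C$ by Brouwer's theorem---the same mechanism underlying Proposition~\ref{prop:notjustpoints}), it suffices to partition $\N$ into $\k$ nonempty \emph{compact} sets. Conversely, copies of $\C$ are compact, so $(3)$ is in fact \emph{equivalent} to the existence of a partition of $\N$ into $\k$ nonempty compact sets; I would record this equivalence as the working reformulation of $(3)$.

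It then remains to produce such a compact partition of $\N$ from a closed partition $\C=\bigsqcup_{\a<\k}K_\a$. The naive attempt again runs through $\N\homeo\C\sm D$: the compact subsets of $\C\sm D$ are precisely the $\C$-closed sets avoiding $D$, so the $\k$ pieces $K_\a$ with $K_\a\cap D=\0$ are already compact subsets of $\N$, and there are $\k$ of them. \textbf{This is exactly where the main obstacle lies.} The remaining (at most $\aleph_0$) pieces $K_\a$ that meet $D$ restrict to sets $K_\a\sm D$ that are closed in $\N$ but not compact, and their union $R$ need not be $\s$-compact---indeed $R$ can be homeomorphic to $\N$ itself, so by the Baire category theorem it cannot be broken into countably many compact (nowhere dense) pieces. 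Thus the countable ``spoilage'' cannot be absorbed for free, and re-partitioning $R$ into $\le\k$ compact sets is essentially the theorem applied again to a subspace. I expect the real content of the lemma to be the resolution of precisely this point: one should reapply the hypothesis to the compact closures of the spoiled pieces and iterate, controlling diameters so that the leftover shrinks to nothing across the recursion while the total count stays $\aleph_0\cdot\k=\k$. Making this recursion terminate---that is, circumventing the failure of $\s$-compactness of $\N$---is the crux of the proof.
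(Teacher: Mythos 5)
Your handling of the easy implications is correct, and your reduction of $(1)\Rightarrow(3)$ to ``$\N$ can be partitioned into $\k$ nonempty compact sets'' is sound (your side remark that the equivalence genuinely fails for countable $\k$ is also right; the paper simply restricts to uncountable $\k$). But the proposal does not prove the lemma. For the one substantial implication you correctly locate the obstacle --- after deleting a countable dense set $D$ from $\C$, the countably many pieces meeting $D$ become closed, non-compact subsets of $\N$ whose union need not be $\sigma$-compact --- and then you only gesture at ``reapplying the hypothesis to the compact closures of the spoiled pieces and iterating, controlling diameters,'' explicitly conceding that making this recursion terminate is the crux. That concession is the gap: no argument is given that any such iteration closes off, and it is far from clear that it can, since each round of re-partitioning can spoil new pieces and the points surviving every round are unaccounted for.

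The idea you are missing is that one should not fight an arbitrary dense set $D$, but should first refine the partition and then \emph{choose} $D$ adapted to it. The paper first converts any closed partition $\set{K_\gamma}{\gamma\in\k}$ into a ``nice'' one in which every element is a nowhere dense copy of $\C$: iterate the derivative $C_{\a+1}=C_\a\sm\bigcup\set{U\su C_\a}{U \text{ clopen and } U\su K_\gamma \text{ for some } \gamma}$, with intersections at limits. Second countability forces stabilization at a countable stage and guarantees that only countably many pieces are lost along the way, so (as $\k$ is uncountable) the stable set is a nonempty perfect compact set, hence a copy of $\C$, carrying an induced size-$\k$ partition into relatively closed, nowhere dense pieces; crossing everything with $\C$ (Brouwer's theorem, exactly as in Proposition~\ref{prop:notjustpoints}) turns these pieces into nowhere dense copies of $\C$. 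Once the partition is nice, the fact that a finite union of closed nowhere dense sets has dense complement lets you pick a dense sequence $\seq{x_n}{n<\w}$ meeting each partition element \emph{at most once}. Deleting $\set{x_n}{n\in\w}$ leaves a copy of $\N$, and each spoiled piece loses exactly one point, so it is homeomorphic to $\C$ minus a point, i.e.\ to $\w\times\C$, which splits into countably many copies of $\C$; the total count stays $\k$. So the correct mechanism is a single refinement step (derivative plus a careful choice of the dense set), with no recursion and no leftover to control.
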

\begin{proof}
If $\k$ is countable then the result is trivial, so suppose $\k \leq \continuum$ is uncountable.

$(3) \Rightarrow (1)$: Let $D$ be a countable dense subset of $\C$; then $\C \setminus D \homeo \N$. If $\set{K_\a}{\a \in \k}$ is a partition of $\C \setminus D$ into copies of $\C$, then $\set{K_\a}{\a \in \k} \cup \set{\{x\}}{x \in D}$ is a partition of $\C$ into $\k$ closed sets.

$(1) \Rightarrow (2)$: This is given by Proposition~\ref{prop:notjustpoints}.

$(2) \Rightarrow (3)$: Let $\set{K_\a}{\a \in \k}$ be a partition of $\C$ into $\k$ copies of $\C$. We say that a partition is ``nice'' if each partition element is nowhere dense. Assuming our partition is nice, we can pick a sequence $\seq{x_n}{n < \w}$ of points in $\C$ such that $\set{x_n}{n \in \w}$ is dense in $\C$ and no two points of our sequence are in the same $K_\a$. Then $X = \C \setminus \set{x_n}{n \in \w} \homeo \N$, and $\set{K_\a \cap X}{\a \in \k}$ is a partition of $X$. For each $\a$, either $K_\a$ contains no $x_n$, in which case $K_\a \cap X = K_\a \homeo \C$, or $K_\a \cap X = K_\a \setminus \{x_n\}$ for some $x_n \in K_\a$, in which case $K_\a \cap X \homeo \w \times \C$. This lets us obtain a partition of $X$ into $\k$ copies of $\C$.

Given any partition $\set{K_\a}{\a \in \k}$ of $\C$ into closed sets, we will now show how to find a nice partition. The basic idea is to do something like a Cantor-Bendixson derivative to eliminate partition elements with non-empty interior. Set $C_0 = \C$. Given $C_\a$, let
$$C_{\a+1} = C_\a \setminus \bigcup \set{U \sub C_\a}{U \text{ is clopen and, for some }\a, \, U \sub K_\a},$$
and if $\a$ is a limit ordinal take $C_\a = \bigcap_{\b < \a}C_\b$. By transfinite recursion, this defines a decreasing sequence $\seq{C_\a}{\a \in Ord}$ of closed subspaces of $C_0$. Because each $C_\a$ is closed and $\C$ is second countable, there is some countable ordinal $\a$ such that $C_\b = C_\a$ for all $\b \geq \a$. If $x$ were an isolated point of $C_\a$ then we would have $x \notin C_{\a+1}$, so $C_\a$ is perfect. By induction, again using the fact that $\C$ is second countable, there are for any $\b \leq \a$ at most countably many $\gamma$ such that $K_\gamma \cap C_\b = \0$. Since $\k$ is uncountable, $\set{K_\gamma}{\gamma \in \k}$ restricts to a partition of size $\k$ on $C_\a$. In particular, $C_\a \neq \0$; since we have already seen that $C_\a$ is closed in $\C$ and has no isolated points, $C_\a \homeo \C$. If $K_\gamma \cap C_\a$ had nonempty interior, there would be some clopen $U \sub C_\a$ with $U \sub K_\gamma$, contradicting the fact that $C_\a = C_{\a+1}$. Thus each $K_\gamma \cap C_\a$ is closed and nowhere dense in $C_\a$. As in the proof of Proposition~\ref{prop:notjustpoints}, $\set{(K_\gamma \cap C_\a) \times \C}{K_\gamma \cap C_\a \neq \0}$ is a partition of $C_\a \times \C \homeo \C$ into nowhere dense copies of $\C$.
\end{proof}

\begin{lemma}\label{lem:sizematters}
If $\card{T} < \card{S}$, then $\[S\]$ is not a condensation of $\[T\]$.
\end{lemma}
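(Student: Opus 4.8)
The plan is to bound the density characters of the two spaces in terms of the sizes of the trees and to use the fact that a continuous surjection cannot increase density. Write $\mathrm{dens}(X)$ for the least cardinality of a dense subset of $X$. Assume toward a contradiction that $\card{T} < \card{S}$ yet there is a condensation $f\colon \[T\] \to \[S\]$. Since every tree is infinite, $\card{T} \ge \aleph_0$, so $\card{S}$ is uncountable. A condensation is in particular a continuous surjection, and the image of a dense set under a continuous surjection is dense; hence $\mathrm{dens}(\[S\]) \le \mathrm{dens}(\[T\])$. It therefore suffices to prove $\mathrm{dens}(\[T\]) \le \card{T}$ and $\mathrm{dens}(\[S\]) \ge \card{S}$, since these give $\card{S} \le \mathrm{dens}(\[S\]) \le \mathrm{dens}(\[T\]) \le \card{T} < \card{S}$, a contradiction.

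The upper bound is immediate: $\set{\[T_s\]}{s \in T}$ is a base for $\[T\]$ of cardinality at most $\card{T}$, and choosing one point from each nonempty basic open set yields a dense set, so $\mathrm{dens}(\[T\]) \le \card{T}$.

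For the lower bound, I would exploit that distinct nodes at a common level are incomparable, so the basic open sets they determine are pairwise disjoint. Fix $n < \w$; because $S$ is pruned, $\set{\[S_s\]}{s \in \lev{n}{S}}$ is a family of $\card{\lev{n}{S}}$ pairwise disjoint nonempty open subsets of $\[S\]$, and any dense set must meet each of them. Hence $\mathrm{dens}(\[S\]) \ge \card{\lev{n}{S}}$ for every $n$, and so $\mathrm{dens}(\[S\]) \ge \sup_{n<\w}\card{\lev{n}{S}}$. Since $\card{S}$ is uncountable and $\card{S} = \sum_{n<\w}\card{\lev{n}{S}}$, some level must be uncountable; then the supremum is infinite and the cardinal identity $\sum_{n<\w}\card{\lev{n}{S}} = \sup_{n<\w}\card{\lev{n}{S}}$ gives $\mathrm{dens}(\[S\]) \ge \card{S}$, completing the argument.

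The only step needing care is this last lower bound: one must verify that the size of $S$ is concentrated on its levels --- that some single level is as large as $S$ --- rather than being accumulated thinly across the $\aleph_0$ levels. This is precisely the identity $\sum_{n<\w}\card{\lev{n}{S}} = \sup_{n<\w}\card{\lev{n}{S}}$ for infinite suprema, and it is the crux of the proof; the monotonicity of density under continuous surjections and its bound by the size of a base are routine.
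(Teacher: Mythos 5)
Your proof is correct. Note that the paper itself contains no argument for this lemma: it simply cites Proposition 5.5 of \cite{Brn}, so your density-character argument serves as a self-contained substitute rather than a variant of anything in the text. The three ingredients all check out: $\mathrm{dens}(\[T\]) \le \card{T}$ because $\set{\[T_s\]}{s \in T}$ is a base and one may pick a point in each nonempty basic set; a continuous surjection sends dense sets to dense sets, giving $\mathrm{dens}(\[S\]) \le \mathrm{dens}(\[T\])$; and since distinct nodes on a common level are incomparable and the tree is pruned, each level yields a pairwise disjoint family of nonempty open sets, so $\mathrm{dens}(\[S\]) \ge \sup_{n<\w}\card{\lev{n}{S}}$, which equals $\card{S} = \sum_{n<\w}\card{\lev{n}{S}}$ because a countable sum of cardinals with infinite supremum equals that supremum, and $\card{S} > \card{T} \ge \aleph_0$ guarantees the supremum is infinite (indeed uncountable). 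One correction to your closing commentary, though it does not damage the proof: it is \emph{not} true that ``some single level is as large as $S$.'' The supremum $\sup_{n<\w}\card{\lev{n}{S}}$ need not be attained --- consider a tree whose $n$-th level has size $\aleph_n$, so that $\card{S} = \aleph_\w$ while every level is strictly smaller. Your actual chain of inequalities, $\mathrm{dens}(\[S\]) \ge \sup_{n<\w}\card{\lev{n}{S}} = \sum_{n<\w}\card{\lev{n}{S}} = \card{S}$, uses only the supremum and never its attainment, so the argument stands as written; only the informal gloss identifying the identity $\sum = \sup$ with ``concentration on a single level'' should be discarded.
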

\begin{proof}
See \cite{Brn}, Proposition 5.5.
\end{proof}

\begin{theorem}\label{thm:nicetypes}
Assume $\continuum = \w_n < \w_\w$. The following are equivalent:
\begin{enumerate}
\item There are $n + 3$ similarity types of PCU spaces, and these are totally ordered by condensation.
\item There are $n + 3$ similarity types of PCU spaces.
\item The similarity types of PCU spaces are totally ordered by condensation.
\item $\C$ is a condensation of every PCU space.
\item For every $\k \leq \continuum$, there is a partition of $\C$ into $\k$ closed sets.
\end{enumerate}
\end{theorem}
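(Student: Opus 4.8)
The plan is to establish all five equivalences through the cycle $(1)\Rightarrow(3)\Rightarrow(4)\Rightarrow(5)\Rightarrow(1)$ together with the separate loop $(1)\Rightarrow(2)\Rightarrow(5)$. The implications $(1)\Rightarrow(2)$ and $(1)\Rightarrow(3)$ are immediate. For $(3)\Rightarrow(4)$ I would use compactness: since $\C$ is itself a PCU space it occurs as one of the types, and any continuous bijection out of the compact space $\C$ is a homeomorphism, so the type of $\C$ condenses onto no PCU space other than $\C$. Hence in a total order by condensation every type must condense onto $\C$, which is precisely $(4)$. For $(4)\Leftrightarrow(5)$ I would quote Theorem 5.1 of \cite{Brn}, which states that $(4)$ holds iff $\C$ can be partitioned into $\k$ copies of $\C$ for every $\k\le\continuum$, and then pass between ``copies of $\C$'' and ``closed sets'' via Lemma~\ref{lem:CtoN}.

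The heart of the argument is $(5)\Rightarrow(1)$, which is a classification of the similarity types. Lemma~\ref{lem:sizematters} shows that the least size of a tree representing a space (its weight) is a similarity invariant, taking values in $\{\aleph_0,\dots,\aleph_n\}$; this already yields, in ZFC, at least $n+3$ types, namely the three classical separable types $\C$, $\w\times\C$, $\N$ (from \cite{Brn}) together with $\w_j^\w$ for $1\le j\le n$, which are pairwise non-similar because they have distinct weights. To get exactly $n+3$ under $(5)$, I would show that every PCU space of uncountable weight $\aleph_j$ is similar to $\w_j^\w$. Using $(5)$ through Lemma~\ref{lem:CtoN} and Theorem~\ref{thm:unexpectedconverse} (equivalently Theorem 6.2 of \cite{Brn}), $\N$, and hence each $\w_j^\w$, becomes a condensation target; combining this with the vanishing-diameter schemes of Section~\ref{sec:trees} and with Lemma~\ref{lem:opensubsets} and Lemma~\ref{lem:thebreakup}, one obtains both a condensation of $\w_j^\w$ onto an arbitrary weight-$\aleph_j$ space $X$ and a condensation of $X$ onto $\w_j^\w$, giving $X$ similar to $\w_j^\w$. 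The total order in $(1)$ then comes from the explicit chain $\w_n^\w\to\cdots\to\w_1^\w\to\N\to\w\times\C\to\C$ of condensations, which follows from $(4)$, from $\w_j^\w\to\N$ under $(5)$, and from the classical relations among the three separable types.

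For $(2)\Rightarrow(5)$ I would argue contrapositively. The lower bound above shows there are always at least $n+3$ types, exactly three of weight $\aleph_0$ and at least one of each uncountable weight, so $(2)$ is equivalent to having exactly one type of each uncountable weight. If $(5)$ fails, choose the least $\aleph_m$ for which $\C$ is not partitionable into $\aleph_m$ closed sets, and separate two weight-$\aleph_m$ spaces using the similarity invariant ``condenses onto $\N$''. The disjoint-union space $\w_m\times\C$ condenses onto $\N$ iff $\N$ can be partitioned into $\aleph_m$ copies of $\C$, which by Lemma~\ref{lem:CtoN} is exactly the failing instance of $(5)$; so $\w_m\times\C$ does not condense onto $\N$, while $\w_m^\w$ does whenever $\N$ admits $\aleph_m$ Borel pieces (Theorem~\ref{thm:unexpectedconverse}). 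Exhibiting two non-similar weight-$\aleph_m$ spaces contradicts $(2)$.

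The main obstacle I anticipate is twofold, and both parts rest on the condensation-construction machinery rather than on soft arguments. The first is the uniqueness step inside $(5)\Rightarrow(1)$: showing that \emph{every} PCU space of weight $\aleph_j$, not merely the convenient witnesses $\w_j^\w$ and $\w_j\times\C$, is similar to $\w_j^\w$, which requires building both condensations explicitly from $(5)$ via schemes. The second is completing $(2)\Rightarrow(5)$ in the residual case where $\N$ admits neither $\aleph_m$ closed nor $\aleph_m$ Borel pieces, so that ``condenses onto $\N$'' no longer separates $\w_m^\w$ from $\w_m\times\C$; there I expect to need a finer invariant, such as condensation onto $\w_i^\w$ for $i<m$, exploiting the minimality of $m$.
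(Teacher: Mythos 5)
Your overall architecture matches the paper's in outline (same easy implications, same compactness argument for $(3)\Rightarrow(4)$, same use of Theorem 5.1 of \cite{Brn} and Lemma~\ref{lem:CtoN} for $(4)\Leftrightarrow(5)$, same grading of similarity types by weight via Lemma~\ref{lem:sizematters}), but both places you flag as ``obstacles'' are genuine gaps, and one of your proposed repairs provably cannot work. Concerning $(2)\Rightarrow(5)$: in the residual case where $\N$ admits neither an $\aleph_m$-sized closed nor an $\aleph_m$-sized Borel partition, your fallback invariant ``condenses onto $\w_i^\w$ for $i<m$'' is just as blind as ``condenses onto $\N$.'' Indeed, a condensation $F:\w_m^\w\to\w_i^\w$ would push the partition of $\w_m^\w$ into $\aleph_m$ copies of $\N$ (Corollary~\ref{cor:inductivedivisions}) forward to a partition of $\w_i^\w$ into $\aleph_m$ sets, each Borel by Lusin--Souslin (each image is separable, hence has countable projections, hence lies in some closed Polish $\Gamma^\w$ with $\Gamma\sub\w_i$ countable); partitioning $\w_i^\w$ into $\aleph_i$ copies of $\N$ and using regularity of $\aleph_m>\aleph_i$, some single copy of $\N$ meets exactly $\aleph_m$ of these pieces, so $\N$ would have an $\aleph_m$-sized Borel partition, contradicting the residual hypothesis. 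So in that case neither $\w_m^\w$ nor $\w_m\times\C$ condenses onto $\w_i^\w$, and no separation is achieved. The paper needs no invariant at all: from $(2)$, the spaces $\w_m\times\C$ and $\w_m^\w$ have the same weight and hence must be similar; given condensations $f:\w_m^\w\to\w_m\times\C$ and $h:\w_m\times\C\to\w_m^\w$, the preimage $f^{-1}(\{0\}\times\C)$ is clopen, hence homeomorphic to $\w_m^\w$ by Lemma~\ref{lem:opensubsets}, so restricting $f$ and composing with $h$ yields a condensation $g:\w_m\times\C\to\C$, whose fiber images $g(\{\a\}\times\C)$ partition $\C$ into $\aleph_m$ copies of $\C$. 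That argument works uniformly, with no case split on which Borel partitions happen to exist.

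Concerning $(5)\Rightarrow(1)$, your proposal asserts precisely what must be proved (``one obtains both a condensation of $\w_j^\w$ onto an arbitrary weight-$\aleph_j$ space $X$ and a condensation of $X$ onto $\w_j^\w$'') without a construction; this is where the bulk of the paper's work lies. The paper builds a cycle of condensations $\[T\]\to\k\times\C\to\k^\w\to\[T\]$ for $\card{T}=\k$: the first arrow uses regularity of $\k$ (some level of $T$ has size $\k$) together with Theorem 5.1 of \cite{Brn}; the second uses the partition of $\k^\w$ into $\k$ copies of $\C$ (Corollary~\ref{cor:inductivedivisions}, Lemma~\ref{lem:CtoN}, and $(5)$); and the third is an induction on $\k$ --- after normalizing $T$ via Theorem 4.3 of \cite{Brn} so that every node $s$ has exactly $\card{T_s}$ children --- in which one builds a $\k^{<\w}$-scheme, invokes the inductive hypothesis at nodes $t$ with $\card{T_t}<\k$, and verifies vanishing diameter before applying Lemma~\ref{lem:schemes}. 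Nothing in your sketch supplies this induction on weight or the tree normalization that makes the scheme construction possible. Finally, your chain $\w_n^\w\to\cdots\to\w_1^\w\to\N$ does not ``follow from $\w_j^\w\to\N$'': Lemma~\ref{lem:sizematters} forbids any condensation $\N\to\w_{j-1}^\w$, so the step $\w_j^\w\to\w_{j-1}^\w$ must be manufactured as in the paper's first claim, by condensing $\w_j^\w\homeo\w_j\times\w_j^\w\to\w_j\times\N$ (Theorem~\ref{thm:unexpectedconverse} plus $(5)$) and then $\w_j\times\N\to\w_{j-1}^\w$ using a partition of $\w_{j-1}^\w$ into $\aleph_j$ copies of $\C$.
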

\begin{proof}
$(1)$ implies $(2)$ and $(3)$ trivially. No Hausdorff space is a nontrivial condensation of $\C$, so $(3)$ implies $(4)$. For every $\k \leq \continuum$, $\k \times \C$ is a completely ultrametrizable space. If $f: \k \times \C \to \C$ is a condensation, then $\set{f(\{a\} \times \C)}{\a \in \k}$ is a partition of $\C$ into $\k$ copies of $\C$. Thus $(4)$ implies $(5)$.

Next we show that $(2)$ implies $(5)$. Suppose there are exactly $n+3$ similarity types of PCU spaces. By Lemma~\ref{lem:sizematters}, there are at least $n$ similarity types corresponding to PCU spaces arising from uncountable trees: one for each possible tree size. Because there are exactly three similarity types for PCU spaces arising from countable trees (see \cite{Brn}, Theorem 3.9), we have exactly $n$ types for uncountable trees, and $\[T\]$ and $\[S\]$ are similar if and only if $\card{S} = \card{T}$.

In particular, if $0 < m < n$ then $\w_m \times \C$ and $\w_m^\w$ are similar. Let $f: \w_m^\w \to \w_m \times \C$ be a condensation. By Lemma~\ref{lem:opensubsets}, $f^{-1}(\{0\} \times \C) \homeo \w_m^\w$. Thus $f \rest f^{-1}(\{0\} \times \C)$ is a condensation from (a homeomorphic copy of) $\w_m^\w$ to $\{0\} \times \C \homeo \C$. Composing with a condensation $\w_m \times \C \to \w_m^\w$, we see that there is a condensation $g: \w_m \times \C \to \C$. Since any Hausdorff condensation of $\C$ is simply $\C$, $\set{g(\{\a\} \times \C)}{\a \in \w_m}$ is a partition of $\C$ into $\aleph_m$ copies of $\C$. Thus $(2)$ implies $(5)$.

It remains to show that $(5)$ implies $(1)$. We will show that if $T$ is a tree with $\k$ nodes, with $\w < \k < \continuum$, then $\[T\]$ is similar to $\k^\w$. This shows that each uncountable $\k < \continuum$ corresponds to a single similarity type, consisting precisely of the spaces arising from trees of size $\k$ (and these types are distinct by Lemma~\ref{lem:sizematters}). Given our assumptions on $\continuum$ and the fact that there are exactly three types corresponding to countable trees (Theorem 3.9 from \cite{Brn}), we then have exactly $n+3$ types. We will also show that $\k^\w$ condenses onto $\lambda^\w$ whenever $\w \leq \lambda < \k \leq \continuum$, which will show that these types are totally ordered by condensation. It will be convenient to prove the latter of these propositions first.

\begin{claim}
If $\w \leq \lambda \leq \k \leq \continuum$, then $\lambda^\w$ is a condensation of $\k^\w$.
\end{claim}
\begin{proof}[Proof of claim]
Combining Corollary~\ref{cor:inductivedivisions} and Lemma~\ref{lem:CtoN} with $(5)$, we see that there is a partition of $\lambda^\w$ into $\k$ copies of $\C$. In particular, $\lambda^\w$ is a condensation of $\k \times \N$. By Theorem~\ref{thm:unexpectedconverse} and $(5)$, $\k \times \N$ is a condensation of $\k \times \k^\w \homeo \k^\w$. Composing condensations, we have a condensation from $\k^\w$ to $\lambda^\w$.
\end{proof}

Let $T$ be a tree and $\card{T} = \k$, with $\w < \k \leq \continuum$. We will show that $\[T\]$ is similar to $\k^\w$ by showing (in the following three claims) that there are condensations $\[T\] \to \k \times \C \to \k^\w \to \[T\]$.

\begin{claim}
$\k \times \C$ is a condensation of $\[T\]$.
\end{claim}
\begin{proof}[Proof of claim]
Because $\k$ is uncountable and regular, and because $T = \bigcup_{n \in \w}\lev{n}{T}$, there is some $n$ such that $\card{\lev{n}{T}} = \k$. $\[T\]$ is the disjoint union of $\set{\[T_s\]}{\l{s} = n}$, and each $\[T_s\]$ is clopen in $\[T\]$. Thus it suffices to show that $\C$ is a condensation of each $\[T_s\]$. This follows immediately from $(4)$, and $(4)$ follows from $(5)$ by Theorem 5.1 of \cite{Brn}.
\end{proof}

\begin{claim}
$\k^\w$ is a condensation of $\k \times \C$.
\end{claim}
\begin{proof}[Proof of claim]
It follows from Corollary~\ref{cor:inductivedivisions}, Lemma \ref{lem:CtoN}, and $(5)$ that $\k^\w$ can be partitioned into $\k$ copies of $\C$. This is equivalent to being a condensation of $\k \times \C$.
\end{proof}

\begin{claim}
$\[T\]$ is a condensation of $\k^\w$.
\end{claim}
\begin{proof}[Proof of claim]
We will prove this claim by induction. Our inductive hypothesis is that whenever $S$ is a tree of size $\lambda$ then there is a condensation $\lambda^\w \to \[S\]$. If $\lambda = \w$ then the inductive hypothesis becomes: every perfect zero-dimensional Polish space is a condensation of $\N$. This is a well-known classical result (see, e.g., Exercise 7.15 in \cite{Kec}). Assume now that the inductive hypothesis holds for every $\lambda < \k$.

By Theorem 4.3 in \cite{Brn}, we may assume that every node $s$ of $T$ has exactly $\card{T_s}$ children. We will build a $\k^{<\w}$-scheme $(B_s)_{s \in \k^{<\w}}$ in $\[T\]$ by recursion.

Set $B_\0 = \[T\]$. Assume now that $B_s$ has been defined and is equal to $\[T_t\]$ for some node $t \in T$. If $\card{T_t} < \k$, then by hypothesis there is a condensation $\card{T_t}^\w \to \[T_t\]$. Since we have already proved that there is a condensation $\k^\w \to \card{T_t}^\w$ (our first claim above), there is a condensation $g: \[\k^{<\w}_s\] \to \[T_t\]$. Define $B_r = g(\[\k^{<\w}_r\])$ for every extension $r$ of $s$. If $\card{T_t} = \k$, then $t$ has $\k$ children in $T$ by our choice of $T$. Enumerating these as $\set{t_\a}{\a < \k}$, we let $B_{s \cat \a} = \[T_{t_\a}\]$. This recursion defines a $\k^{<\w}$-scheme $(B_s)_{s \in \k^{<\w}}$.

Let $x \in \[\k^{<\w}\]$. If there is some $n$ such that $B_{x(n)} = \[T_t\]$ for some $t$ with fewer than $\k$ children, then $B_{x(m)}$ is defined by some embedding $g: B_{x(n)} \to X$ for all $m \geq n$. Because $g$ is an embedding, $\lim_{m \to \infty}\mathrm{diam}(B_{x(m)}) = 0$ and $\bigcap_{n \in \w}B_{x(n)} = g(x)$. Otherwise, $B_{x(n)}$ is always equal to some $\[T_t\]$, where $t$ has $\k$ children in $T$. Then (by an easy induction) $B_{x(n)} = \[T_{y(n)}\]$ for some $y \in \[T\]$ and every $n$. Since $\set{\[T_{y(n)}\]}{n < \w}$ is a local basis for $y$, $\lim_{n \to \infty}\mathrm{diam}(B_{x(n)}) = 0$ in this case too; also, clearly, $\bigcap_{n \in \w}B_{x(n)} = y$. Thus $(B_s)_{s \in \k^{<\w}}$ has vanishing diameter, and $\bigcap_{n \in \w}B_{x(n)} \neq \0$ for every $x \in \[\k^{<\w}\]$. Furthermore, it is clear from our construction that $B_s = \bigcup \set{B_t}{t \text{ is a child of } s}$ for every $s \in \k^{<\w}$. It follows from Lemma~\ref{lem:schemes} that the associated map of $(B_s)_{s \in \k^{<\w}}$ is a condensation.

This shows that the inductive hypothesis holds at $\k$ and completes the induction.
\end{proof}
This shows that uncountable trees of the same size represent similar spaces and completes the proof that $(5)$ implies $(1)$.
\end{proof}

\begin{corollary}
Let $n < \w$. It is consistent with \textit{ZFC} that $\continuum = \w_n$ and the five propositions listed in the statement of Theorem~\ref{thm:nicetypes} all hold.
\end{corollary}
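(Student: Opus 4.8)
The plan is to read off the desired model directly from the two main results already in hand, namely Theorem~\ref{thm:lotsofpartitions} and Theorem~\ref{thm:nicetypes}, with essentially no additional work. The key observation is that proposition (5) of Theorem~\ref{thm:nicetypes} --- that for every $\k \leq \continuum$ there is a partition of $\C$ into $\k$ closed sets --- is precisely the conclusion produced by Theorem~\ref{thm:lotsofpartitions}. So the corollary is a matter of lining up the hypotheses of the two theorems.

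First I would check that $\w_n$ is a legitimate target value for $\continuum$. For $n \geq 1$ the cardinal $\w_n$ is a successor, hence regular, so it has uncountable cofinality and does not run afoul of K\"onig's theorem ($\cf(\continuum) > \w$); thus $\w_n$ is a ``possible value of $\continuum$'' in the sense required by Theorem~\ref{thm:lotsofpartitions}. (The case $n = 0$ is vacuous, since $\continuum = \w_0$ is impossible, so the statement is really intended for $1 \leq n < \w$.) Next I would apply Theorem~\ref{thm:lotsofpartitions} with target value $\continuum = \w_n$ to obtain a model $M$ of \textit{ZFC} in which $\continuum = \w_n$ and, for every $\k \leq \continuum$, there is a partition of $\C$ into $\k$ closed sets.

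In $M$ we then have $\continuum = \w_n < \w_\w$, so the standing hypothesis of Theorem~\ref{thm:nicetypes} is satisfied, and proposition (5) of that theorem holds in $M$. Since the five propositions of Theorem~\ref{thm:nicetypes} are equivalent under this hypothesis, all five hold in $M$, which is exactly the assertion of the corollary.

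There is no genuine obstacle in the present argument: the substantive content lives entirely in the earlier results, namely the forcing construction of Theorem~\ref{thm:lotsofpartitions} (which simultaneously supplies partitions of $\C$ of all sizes $\leq \continuum$) and the chain of implications culminating in $(5) \Rightarrow (1)$ inside Theorem~\ref{thm:nicetypes} (the inductive construction of condensations between the end spaces of uncountable trees of equal size). The corollary merely records that both conclusions can be extracted from one and the same model.
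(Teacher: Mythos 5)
Your proof is correct and takes exactly the same route as the paper, whose entire proof is ``Combine Theorem~\ref{thm:lotsofpartitions} with Theorem~\ref{thm:nicetypes}'': you instantiate Theorem~\ref{thm:lotsofpartitions} at $\continuum = \w_n$ to get proposition $(5)$ in the resulting model, then invoke the equivalence in Theorem~\ref{thm:nicetypes}. Your side remarks (that $\w_n$ is a legitimate value of $\continuum$ for $n \geq 1$, and that $n = 0$ is vacuous) are harmless additions the paper leaves implicit.
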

\begin{proof}
Combine Theorem~\ref{thm:lotsofpartitions} with Theorem~\ref{thm:nicetypes}.
\end{proof}

This corollary provides partial answers to Questions 5.11 and 5.12 from \cite{Brn}. It leaves open the question of whether the similarity types can be totally ordered when $\continuum > \w_\w$.

It is worth pointing out that the proof of Theorem~\ref{thm:nicetypes} does not depend on $\continuum$ being small. If we consider a model guaranteed by Theorem~\ref{thm:lotsofpartitions} in which $\continuum > \w_\w$, then the proof of Theorem~\ref{thm:nicetypes} shows that two PCU spaces arising from trees of size $\k$ with $\aleph_0 < \k < \aleph_\w$ will be similar. This gives countably many similarity types of spaces with weight less than $\w_\w$, with the types totally ordered (with order type $\w$) by condensation.

\end{document}